\documentclass[11pt,reqno]{amsart}
\usepackage{amssymb}
\usepackage{mathptmx}
\usepackage{mathtools}
\usepackage{mathrsfs}
\usepackage[all]{xy}
\usepackage{stmaryrd}
\usepackage{fancyhdr}
\usepackage{hyperref}
\usepackage{mathrsfs}

\usepackage{pgf, tikz}
\usetikzlibrary{arrows, automata}

\usepackage{tikz-cd}
\usepackage{cite}
\usepackage{amsmath,amsfonts}
\usepackage{graphicx}
\usepackage{wrapfig}
\usepackage{array}
\usepackage{amsthm}
\usepackage{amsmath,amsthm,amssymb,hyperref}
\usepackage{exercise}

\usepackage{enumerate}
\usepackage{tikz}
\pagestyle{plain}
\usepackage[left=1.2in, right=1.2in, top=1in, bottom=1in]{geometry}
\usepackage{etoolbox}
\usepackage{hyperref}
\patchcmd{\section}{\scshape}{\bfseries}{}{}
\makeatletter
\renewcommand{\@secnumfont}{\bfseries}
\makeatother
\xyoption{all}
\thispagestyle{empty}

\DeclareMathOperator{\Pic}{Pic}
\DeclareMathOperator{\Div}{Div}
\DeclareMathOperator{\dv}{Div}

\newcommand{\Jac}{\textrm{Jac}}{}
  
\newcommand{\Z}{\mathbb{Z}}

\input xy
\xyoption{all}
\thispagestyle{empty}

\theoremstyle{definition}
\newtheorem{mydef}{\textbf{Definition}}[section]
\newtheorem{myeg}[mydef]{\textbf{Example}}

\newtheorem{rmk}[mydef]{\textbf{Remark}}
\newtheorem*{que}{\textbf{Question}}

\theoremstyle{plain}
\newtheorem{mythm}[mydef]{\textbf{Theorem}}

\newtheorem*{nothma}{\textbf{Theorem A}}
\newtheorem*{nothmb}{\textbf{Theorem B}}
\newtheorem*{nothmc}{\textbf{Theorem C}}

\newtheorem{pro}[mydef]{\textbf{Proposition}}

\patchcmd{\abstract}{\scshape\abstractname}{\normalsize{\textbf{\abstractname}}}{}{}
\begin{document}

\title{Gluing of Graphs and Their Jacobians}

\author{Alessandro Chilelli}
\address{Department of Mathematical Sciences, State University of New York at Binghamton, NY 13902, USA}
\email{achilel1@binghamton.edu}

\author{Jaiung Jun}
\address{Department of Mathematics, State University of New York at New Paltz, NY 12561, USA}
\email{junj@newpaltz.edu}

\makeatletter
\@namedef{subjclassname@2020}{%
	\textup{2020} Mathematics Subject Classification}
\makeatother

\subjclass[2020]{05C50, 05C76}
\keywords{Jacobian of a graph, sandpile group, critical group, chip-firing game, gluing graphs, cycle graph, Tutte polynomial, Tutte's rotor construction}

\begin{abstract}
The Jacobian of a graph is a discrete analogue of the Jacobian of a Riemann surface. In this paper, we explore how Jacobians of graphs change when we glue two graphs along a common subgraph focusing on the case of cycle graphs. Then, we link the computation of Jacobians of graphs with cycle matrices. Finally, we prove that Tutte's rotor construction with his original example produces two graphs with isomorphic Jacobians when all involved graphs are planar. This answers the question posed by Clancy, Leake, and Payne in \cite{clancy2015note}, stating it is affirmative in this case. 
\end{abstract}

\maketitle

\section{Introduction}

A chip-firing game on a graph $G$ is a combinatorial game starting with a pile of chips (or ``negative'' chips) at each vertex of $G$. At each turn, a player chooses a vertex to lend (resp.~borrow) chips to (resp.~from) adjacent vertices. A player wins if no vertex has ``negative'' chips after finitely many turns. When one plays a chip-firing game, a natural question one may have is whether or not there is a winning strategy depending on an initial configuration of chips. 

In \cite{baker2007riemann}, inspired by the fact that finite graphs could be seen as a discrete analogue of Riemann surfaces, Baker and Norine formulated and proved an analogue of the Riemann-Roch theorem where effective divisors correspond to configurations where no vertex has negative chips. As an application, Baker and Norine provided an easy way to check whether or not there is a winning strategy for the chip-firing game in some cases. For instance, if the total number of initial chips on a graph is greater than or equal to the number $g=|E(G)|-|V(G)|+1$, then there is always a winning strategy for this initial configuration. For the precise statement, see \cite[Theorem 1.9]{baker2007riemann}. We note that Dhar's burning algorithm provides a resolution of a chip-firing game in general \cite[\S 3]{corry2018divisors}. In fact, a chip-firing game (more generally divisor theory for graphs) is one of the main tools in some sub-fields of algebraic geometry. We refer the interested reader to \cite{baker2016degeneration} for an extensive survey. 

Each configuration of chips on a graph $G$ can be considered as an element of the free abelian group $\mathbb{Z}[V(G)]$ generated by $V(G)$, the set  of vertices of $G$. Two configurations, $D_1$ and $D_2$, are equivalent if and only if $D_1$ can be obtained from $D_2$ by a finite sequence of moves (lending and borrowing). In particular, for a given initial configuration $D$, one has a winning strategy if and only if $D$ is equivalent to a configuration whose coefficient at each vertex is nonnegative. This defines a congruence relation $\sim$ on $\mathbb{Z}[V(G)]$, and hence we have the quotient group $\Pic(G):=\mathbb{Z}[V(G)]/\sim$, called the \emph{Picard group} of $G$. The \emph{Jacobian}  $\Jac(G)$ of $G$ is defined to be the torsion subgroup of $\Pic(G)$.


Due to its extensive applications, there is a growing interest in computing Jacobians for various families of graphs. In \cite{biggs1999chip}, Biggs computed the Jacobian of a wheel graph, $W_n$, when the number of vertices on the rim of $W_n$ is odd. Later, in \cite{norine2011jacobians}, Norine and Whalen computed Jacobians of nearly complete graphs and threshold graphs, and as an application they computed a remaining case of the Jacobian for wheel graphs. Also, Jacobians of iterated cones $G_n$ over a graph $G$, which is the join of $G$ and the complete graph $K_n$, have been studied in  \cite{brown2018chip}, \cite{goel2019critical}. We refer the reader to \cite{alfaro2012sandpile} and the references therein for an extensive list of authors contributing to this line of research. 

An interesting question one may ask is how the Jacobian of a graph changes under various graph operations such as deletion, contraction, or gluing along a common subgraph. For example, a nearly complete graph in \cite{norine2011jacobians} is a graph obtained by removing edges from a complete graph in a certain way. In general, based on our numerical experiments, it seems to be very hard to precisely compute how the Jacobian changes under these graph operations.

In this paper, we study the case where we glue two graphs along a common subgraph. Our motivation came from a question posed in \cite{clancy2015note}. The authors asked whether or not a certain graph-gluing process (Tutte's rotor construction) produces a pair of graphs whose Jacobians are isomorphic while they were proving another question concerning the two-variable zeta function of a graph.

To be precise, in \cite{lorenzini2012two} Lorenzini introduced the notion of a Riemann-Roch structure on a lattice of corank $1$ in $\mathbb{Z}^n$ (including the Riemann-Roch theory for graphs), and associated a two-variable zeta function to each such structure. Lorenzini's construction was inspired by several works on two-variable zeta functions for number fields and algebraic curves over finite fields \cite{deninger2003two}, \cite{lagarias2003two}, \cite{pellikaan1996special}, \cite{van2000effectivity}.\footnote{In the definitions of two-variable zeta functions above, $h^0(D)$ and $h^1(D)$ are not defined as the dimension of certain associated vector spaces. In \cite{borisov2003convolution}, Borisov constructed the spaces $H^0(D)$ and $H^1(D)$ which precisely compute $h^0(D)$ and $h^1(D)$ for the zeta function in \cite{van2000effectivity} by working in a larger category (than the category abelian groups).} With his construction, Lorenzini asked whether or not two connected graphs with the same Tutte polynomial should have the same associated two-variable zeta functions or isomorphic Jacobians, and he proved that for trees the answer is affirmative. 

In \cite{clancy2015note}, Clancy, Leake, and Payne proved that no two of these invariants determine the third in general. One of their methods was to use Tutte's rotor construction \cite{tutte1974codichromatic}. Roughly speaking, Tutte's rotor construction glues two graphs, $R$ and $S$, in two different ways through a fixed automorphism of $R$ (see \S \ref{section: Tutte's rotor construction} for the precise definition) producing two non-isomorphic graphs with the same Tutte polynomial. While they were producing counterexamples, they observed that applying Tutte's construction with his original example of a rotor of order 3 always produced a pair of graphs with isomorphic Jacobians in all of their test cases. Hence, they posed the following question:

\begin{que}(\cite[Question 1.4.]{clancy2015note})\label{question: Sam's paper}
	Does Tutte's rotor construction with his original example \cite[Figure 2]{tutte1974codichromatic} of a rotor of order 3 always generate a pair of graphs with isomorphic Jacobians?
\end{que}

To investigate the above question, we consider a more general situation of gluing two graphs along a common subgraph. Tutte's rotor construction corresponds to the case when a common subgraph is a set of isolated vertices. We note that the Jacobian of a graph $G$ can be computed by the Laplacian matrix of $G$. The gluing of Laplacians of graphs and their spectra has been studied in \cite{contreras2020gluing}, which might be useful for further investigation. \par\medskip

Let $G_1$ and $G_2$ be graphs with a common subgraph $H$, and $G=G_1\sqcup_HG_2$ be the graph obtained by gluing $G_1$ and $G_2$ along $H$. Unfortunately, there is no relationship among $\Jac(G)$, $\Jac(G_1)$, $\Jac(G_2)$, and $\Jac(H)$ in general even when $H$ is just an edge (see Example \ref{example: glued along one edge}). For this reason, we will mostly consider gluings of cycle graphs. 

In \S \ref{section: examples of jacobians under graph gluing}, we compute the Jacobian of several classes of graphs that we obtain by gluing cycle graphs. For example, we compute the Jacobian of the following gluing: Let $C_n$ and $C_k$ be cycle graphs with $n$ and $k$ vertices respectively, and $p$ be a positive integer less than $\min\{n,k\}$. Let $A$ (resp.~$B$) be a path of length $p$ in $C_n$ (resp.~$C_k$). Let $C_n*_{A,B} C_k$ be the graph obtained by gluing $C_n$ and $C_k$ along the paths $A$ and $B$ (see Example \ref{example: examples of gluing}). Then, we have the following.   


\begin{nothma}(Proposition \ref{proposition: gluing two consecutive})
With the same notation as above, we have
\[
\emph{Jac}(C_n*_{A,B} C_k) \simeq \mathbb{Z}/d\mathbb{Z} \times \mathbb{Z}/ ((nk-p^2)/d)\mathbb{Z}, 
\] 
where $d = \emph{gcd}(n,k,p)$.
\end{nothma}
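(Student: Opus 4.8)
The plan is to first recognize the combinatorial type of the glued graph and then present its Jacobian as the cokernel of the $2\times 2$ Gram matrix of its cycle lattice. Gluing $C_n$ and $C_k$ along the length-$p$ paths $A$ and $B$ identifies these into a single path of length $p$, whose endpoints I call $u$ and $v$; the complementary arc of $C_n$ becomes a $u$--$v$ path of length $n-p$, and that of $C_k$ a $u$--$v$ path of length $k-p$. Hence $G := C_n *_{A,B} C_k$ is the theta graph consisting of three internally disjoint $u$--$v$ paths $P_1,P_2,P_3$ of lengths $\ell_1=p$, $\ell_2=n-p$, $\ell_3=k-p$. A quick count gives $|E(G)|=n+k-p$ and $|V(G)|=n+k-p-1$, so its genus is $g=2$; in particular $\Jac(G)$ is a finite abelian group generated by at most two elements, and I expect exactly two invariant factors.

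Next I would pin down the group via the flow (cycle) lattice. Orienting each $P_i$ from $u$ to $v$, conservation of flow at every internal vertex forces an integral flow to be constant, say $x_i$, along $P_i$, and conservation at $u$ forces $x_1+x_2+x_3=0$. Thus the flow lattice is $\mathcal F=\{(x_1,x_2,x_3)\in\mathbb{Z}^3:\ \sum_i x_i=0\}$, with basis $b_1=(1,-1,0)$ and $b_2=(1,0,-1)$, i.e. $b_1=P_1-P_2$ and $b_2=P_1-P_3$. Since a flow taking value $x_i$ on $P_i$ occupies all $\ell_i$ edges there, the standard inner product on $\mathbb{Z}^{E(G)}$ restricts to $\langle (x_i),(y_i)\rangle=\sum_i \ell_i x_i y_i$, and the Gram matrix of $(b_1,b_2)$ is
\[
M=\begin{pmatrix}\ell_1+\ell_2 & \ell_1\\ \ell_1 & \ell_1+\ell_3\end{pmatrix}=\begin{pmatrix} n & p\\ p & k\end{pmatrix}.
\]
By the cycle-matrix description of the Jacobian, $\Jac(G)\cong \mathcal F^{\#}/\mathcal F\cong \operatorname{coker}(M)$, and as a sanity check $\det M=nk-p^2$ equals the number of spanning trees $\ell_1\ell_2+\ell_2\ell_3+\ell_3\ell_1$ of the theta graph.

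Finally I would read off the Smith normal form of $M$. For a $2\times2$ integer matrix the first invariant factor is the gcd of its entries and the product of the two invariant factors is $|\det|$. Here the gcd of the entries $n,p,p,k$ is $\gcd(n,p,k)=d$, so the Smith normal form is $\operatorname{diag}\big(d,\ (nk-p^2)/d\big)$, giving
\[
\Jac(G)\cong \mathbb{Z}/d\mathbb{Z}\ \times\ \mathbb{Z}/\big((nk-p^2)/d\big)\mathbb{Z},
\]
as claimed. The only genuinely nontrivial input is the reduction of the Jacobian to $\operatorname{coker}(M)$: one must know (or reprove in this paper's cycle-matrix framework) that the Gram matrix of a basis of the flow lattice presents the Jacobian, and one must verify that $b_1,b_2$ generate the \emph{full} integral flow lattice rather than a finite-index sublattice. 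The explicit description $\mathcal F=\{\sum_i x_i=0\}$ settles the latter point directly, and once the presentation by $M$ is in hand the remaining arithmetic is immediate; potential edge cases (e.g. $n-p=1$, producing parallel edges) do not affect the computation, since the flow-lattice argument is uniform in the $\ell_i$.
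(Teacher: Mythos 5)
Your proof is correct, but it takes a genuinely different route from the paper's. You identify the glued graph as a theta graph and present $\Jac$ as the discriminant group $\mathcal F^{\#}/\mathcal F$ of the integral flow lattice, via the Gram matrix of the explicit basis $b_1=P_1-P_2$, $b_2=P_1-P_3$. The paper instead forms the planar dual $\hat G$, which has only three vertices (the two bounded regions and the outer region), takes its reduced Laplacian with respect to the outer vertex, namely $\left[\begin{smallmatrix} n & -p\\ -p & k\end{smallmatrix}\right]$, and invokes the Cori--Rossin duality theorem $\Jac(G)\simeq\Jac(\hat G)$ (Theorem \ref{theorem: planar by Cori and Rossin}). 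The two matrices agree up to replacing $b_2$ by $-b_2$, so the concluding Smith-normal-form arithmetic (first invariant factor $\gcd(n,k,p)$, product of invariant factors $nk-p^2$) is identical; what differs is the justification that this $2\times 2$ matrix presents the Jacobian. You appeal to the flow-lattice description (Bacher--de la Harpe--Nagnibeda), which is a correct and well-known theorem but is never proved in this paper; you rightly flag this as the one nontrivial input, and your verification that $b_1,b_2$ span the \emph{full} flow lattice is exactly the care needed there. Note that within the paper's own toolkit your input is available: your basis is precisely the face-cycle basis for the obvious plane embedding of the theta graph, so Proposition \ref{proposition: reduced = cycle matrix} (itself proved by the same duality argument) supplies the presentation $\Jac(G)\simeq\mathrm{coker}(M)$. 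The trade-off: your argument is intrinsic and would survive nonplanar generalizations once the lattice theorem is granted, while the paper's is shorter and self-contained modulo the citation of Cori--Rossin, at the cost of depending on a planar embedding.
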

We also apply a similar argument as in the proof of Theorem A to compute various gluings of graphs in \S \ref{section: examples of jacobians under graph gluing}.

For a graph $G$, its first (mod 2) homology $H_1(G,\mathbb{Z}_2)$ is called the cycle space (a vector space over $\mathbb{Z}_2$) of $G$.\footnote{Here, we consider $G$ as a simplicial complex.} The cycle space of $G$ can be considered as the set of all spanning Eulerian subgraphs of $G$, where addition is given by symmetric difference. A fundamental cycle of $G$ is a cycle created by adding an edge to a spanning tree of $G$. Once we fix a spanning tree, the set of fundamental cycles of $G$ forms a basis of $H_1(G,\mathbb{Z}_2)$. In \cite{chen2009critical}, Chen and Ye introduced the weighted fundamental circuits intersection matrix of a graph, and proved that it can be used to compute the Jacobian of a graph.

Let $G$ be a planar graph. We first introduce a matrix, $\mathbf{B}_G$, which is obtained from the face cycle matrix of $G$. Note that $\mathbf{B}_G$ is well-defined after we fix an embedding of $G$ into the plane. The matrix $\mathbf{B}_G$ encodes information of face cycles of $G$ and how they are adjacent to each other. In \S \ref{section: jacobians of graphs via cycle matrices}, we provide another way to compute the Jacobian in terms of face cycle matrices for planar graphs. This reduces the size of a matrix that we have to compute greatly. 

\begin{nothmb}(Proposition \ref{proposition: reduced = cycle matrix})
Let  $G$ be a connected, planar graph. Fix an embedding of $G$ into the plane. Then $\mathbf{B}_G$ and a reduced Laplacian $\widetilde{L}_G$ of $G$ have the same invariant factors. In particular, $\emph{Jac}(G)$ can be computed from $\mathbf{B}_G$.  
\end{nothmb}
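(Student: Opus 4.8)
The plan is to exhibit both $\mathbf{B}_G$ and $\widetilde{L}_G$ as Gram matrices of two orthogonally complementary sublattices of the unimodular lattice $\mathbb{Z}^{E(G)}$, and then to invoke the standard fact that such complementary lattices share a common discriminant (glue) group, which here is $\Jac(G)$. Write $D$ for a fixed oriented incidence matrix of $G$, $L_G = DD^{T}$ for the Laplacian, $\mathcal{Z} = \Ker D \subseteq \mathbb{Z}^{E(G)}$ for the cycle lattice, and $\mathcal{B} = \operatorname{im} D^{T}$ for the cut (bond) lattice. The first step is to match the entries of $\mathbf{B}_G$ against inner products: orienting every bounded face of the fixed embedding coherently and recording each face boundary as a vector in $\mathbb{Z}^{E(G)}$ assembles an $|E(G)| \times g$ matrix $C$, where $g = |E(G)| - |V(G)| + 1$, and a direct check of the defining entries of $\mathbf{B}_G$ shows $\mathbf{B}_G = C^{T}C$: the diagonal records the length of each face boundary, while the off-diagonal $(i,j)$ entry records, with sign, the edges shared by two faces. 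Thus $\mathbf{B}_G$ is the Gram matrix of the bounded face cycles, and $\operatorname{coker}(\mathbf{B}_G) \cong \mathcal{Z}^{\sharp}/\mathcal{Z}$ once we know the columns of $C$ form a $\mathbb{Z}$-basis of $\mathcal{Z}$.

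The essential use of planarity is the claim that the bounded face cycles form a $\mathbb{Z}$-basis of $\mathcal{Z}$, equivalently that the columns of $C$ generate $\Ker D$ over $\mathbb{Z}$ and not merely over $\mathbb{Q}$. I would prove this homologically: regard the fixed embedding as a CW structure on the sphere $S^{2}$ obtained by compactifying the unbounded face, so that the cellular boundary $\partial_{2}\colon \mathbb{Z}^{F} \to \mathbb{Z}^{E(G)}$ sends each face to its oriented boundary, where $F$ is the set of all faces. Since $H_{1}(S^{2}) = 0$ and $H_{2}(S^{2}) = \mathbb{Z}$, there is a short exact sequence $0 \to \mathbb{Z} \to \mathbb{Z}^{F} \xrightarrow{\partial_{2}} \Ker D \to 0$ whose kernel is generated by the fundamental class $\sum_{F}[F]$; as this class has nonzero coefficient on the unbounded face, deleting that face's column leaves exactly $g$ face boundaries that remain injective under $\partial_{2}$ and still surject onto $\Ker D$, hence form a $\mathbb{Z}$-basis of $\mathcal{Z}$. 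This establishes $\operatorname{coker}(\mathbf{B}_G) \cong \mathcal{Z}^{\sharp}/\mathcal{Z}$.

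On the other side, $\widetilde{L}_G = \widetilde{D}\,\widetilde{D}^{T}$ is by construction the Gram matrix of the $|V(G)|-1$ rows of the reduced incidence matrix $\widetilde{D}$, that is, of the vertex stars other than the deleted one; these form a $\mathbb{Z}$-basis of the cut lattice $\mathcal{B}$, so $\operatorname{coker}(\widetilde{L}_G) \cong \mathcal{B}^{\sharp}/\mathcal{B}$, which is the usual identification of $\Jac(G)$. It remains to see $\mathcal{Z}^{\sharp}/\mathcal{Z} \cong \mathcal{B}^{\sharp}/\mathcal{B}$. Here $\mathcal{Z}$ and $\mathcal{B}$ are orthogonal, since $\langle z, D^{T}x\rangle = \langle Dz, x\rangle = 0$; both are saturated in $\mathbb{Z}^{E(G)}$, with $\mathcal{Z}$ saturated because it is a kernel and $\mathcal{B}$ saturated because the incidence matrix is totally unimodular, so $\operatorname{coker}(D^{T})$ is torsion-free; and their ranks $g$ and $|V(G)|-1$ sum to $|E(G)|$. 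Hence $\mathcal{B} = \mathcal{Z}^{\perp}\cap\mathbb{Z}^{E(G)}$, and the standard glue-group computation for complementary primitive sublattices of a unimodular lattice (orthogonal projection onto $\mathcal{Z}\otimes\mathbb{Q}$) gives canonical isomorphisms $\mathcal{Z}^{\sharp}/\mathcal{Z} \cong \mathbb{Z}^{E(G)}/(\mathcal{Z} \oplus \mathcal{B}) \cong \mathcal{B}^{\sharp}/\mathcal{B}$. Chaining the identifications yields $\operatorname{coker}(\mathbf{B}_G) \cong \Jac(G) \cong \operatorname{coker}(\widetilde{L}_G)$.

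The step I expect to demand the most care is the integral-basis claim of the second paragraph: it is exactly where planarity is used, and the homological argument must be aligned with the concrete sign conventions built into $\mathbf{B}_G$ so that $\mathbf{B}_G = C^{T}C$ holds on the nose rather than up to an unspecified change of orientation. A secondary point to state precisely is the meaning of ``same invariant factors'': since $\mathbf{B}_G$ is $g \times g$ while $\widetilde{L}_G$ is $(|V(G)|-1)\times(|V(G)|-1)$, the two Smith normal forms generally differ in the number of trivial factors, so the assertion is that their invariant factors exceeding $1$ coincide, which is precisely what ``$\Jac(G)$ can be computed from $\mathbf{B}_G$'' requires. If one prefers to avoid lattice duality, the same conclusion follows from the theorem of Chen and Ye: the face-cycle basis and any fundamental-cycle basis differ by a unimodular matrix $U$, so $\mathbf{B}_G = U^{T}(C_{\mathrm{fund}}^{T}C_{\mathrm{fund}})U$ has the same Smith normal form as the fundamental circuits intersection matrix, which already computes $\Jac(G)$. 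Either route also recovers the classical isomorphism $\Jac(G)\cong\Jac(G^{*})$, since $\mathbf{B}_G$ is nothing but the reduced Laplacian of the planar dual $G^{*}$.
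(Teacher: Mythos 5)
Your proposal is correct, but it takes a genuinely different route from the paper. The paper's proof is two lines: it checks entrywise that $\mathbf{B}_G$ equals the reduced Laplacian $\widetilde{L}_{\hat{G}}$ of the planar dual $\hat{G}$ obtained by deleting the row and column of the vertex corresponding to the outer face (diagonal entries count edges of each face cycle, off-diagonal entries count shared edges with a negative sign), and then invokes the Cori--Rossin theorem $\Jac(G)\simeq\Jac(\hat{G})$ as a black box. You never use Cori--Rossin; instead you realize $\mathbf{B}_G$ and $\widetilde{L}_G$ as Gram matrices of the cycle lattice $\mathcal{Z}=\Ker D$ and the cut lattice $\mathcal{B}=\operatorname{im}D^{T}$ inside the unimodular lattice $\mathbb{Z}^{E(G)}$, with the planarity input isolated in the claim that the bounded face boundaries form a $\mathbb{Z}$-basis of $\mathcal{Z}$ (your sphere-homology argument for this is sound: $H_1(S^2)=0$ gives surjectivity of $\partial_2$ onto $\mathcal{Z}$, and $H_2(S^2)=\mathbb{Z}$ generated by the fundamental class, which is supported on the unbounded face, gives injectivity after deleting that face), and then you conclude by the standard glue-group isomorphism $\mathcal{Z}^{\sharp}/\mathcal{Z}\cong\mathbb{Z}^{E(G)}/(\mathcal{Z}\oplus\mathcal{B})\cong\mathcal{B}^{\sharp}/\mathcal{B}$ for complementary primitive sublattices; your verifications of primitivity (kernels are saturated; total unimodularity of the incidence matrix makes $\operatorname{coker}(D^T)$ torsion-free) are both right. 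What each approach buys: the paper's proof is much shorter but rests entirely on an external theorem, whereas yours is self-contained and in effect reproves Cori--Rossin (as you note at the end), placing the result in the general lattice-theoretic framework where the critical group is the discriminant group of the cycle (equivalently cut) lattice. You also make a precision the paper glosses over: since $\mathbf{B}_G$ is $g\times g$ while $\widetilde{L}_G$ is $(|V(G)|-1)\times(|V(G)|-1)$, ``same invariant factors'' can only mean agreement of the factors exceeding $1$, i.e., agreement up to padding by trivial factors --- which is exactly what is needed to compute $\Jac(G)$. Your closing alternative via Chen--Ye (unimodular change of basis between the face-cycle basis and a fundamental-cycle basis) is also valid and mirrors the remark the paper itself makes about \cite{chen2009critical} subsuming this result.
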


\begin{rmk}
After we posted our first version on arXiv, we learned from Matt Baker the work \cite{chen2009critical} of Chen and Ye by which one can obtain our Theorem B as a special case. Also, we learned from Alfaro concerning his work with Villagr{\'a}n \cite{alfaro2020structure} which has the same definition as in our matrix $\mathbf{B}_G$.
\end{rmk}

Finally, in \S \ref{section: Tutte's rotor construction}, we answer \cite[Question 1.4]{clancy2015note} in the case when all involved graphs are planar. Our proof heavily depends on Theorem B by which we only have to keep track of newly-created face cycles (and how they are adjacent to other face cycles) after Tutte's rotor construction. We prove the following case of the question:

\begin{nothmc}(Theorem \ref{theorem: main theorem Tuttes for planar})
Let $R$ be Tutte's original example of a rotor of order $3$, and $S$ be a connected planar graph. Suppose that $G$ and $H$ are two graphs obtained from $S$ by Tutte's rotor construction with $R$. If $G$ and $H$ are planar, then $\Jac(G) \simeq \Jac(H)$. 
\end{nothmc}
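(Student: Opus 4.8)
The plan is to reduce the statement to a comparison of the face cycle matrices $\mathbf{B}_G$ and $\mathbf{B}_H$ afforded by Theorem B, and then to exploit the order-$3$ symmetry of Tutte's rotor $R$ to show that these two integer matrices have the same invariant factors. Since $G$ and $H$ are produced by Tutte's rotor construction, they share the same Tutte polynomial, so by the matrix-tree theorem one already gets $|\Jac(G)| = |\Jac(H)|$; thus only the finer group structure, namely the invariant factor decomposition, remains to be matched. By Theorem B, it therefore suffices to prove that $\mathbf{B}_G$ and $\mathbf{B}_H$ have the same Smith normal form.

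To begin, I would fix compatible planar embeddings of $G$ and $H$ that agree on $R$ and on $S$, and sort their bounded faces into three groups: the faces lying entirely inside $S$, the faces lying entirely inside $R$ away from the three boundary vertices permuted by the rotor automorphism $\rho$, and the faces incident to the gluing region. The first two groups, together with their mutual adjacencies, are literally identical for $G$ and $H$, since the rotor construction alters only how $S$ is attached at the boundary vertices. Writing $\mathbf{B}_G$ and $\mathbf{B}_H$ in block form with respect to this partition, the $S$-block and the interior-$R$-block coincide, and the two matrices differ only in the rows and columns recording the newly-created faces at the interface and their adjacencies to the boundary faces of $R$.

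Next I would invoke the defining feature of a rotor: the automorphism $\rho$ of order $3$ cyclically permutes the three boundary vertices and hence induces a symmetry on the faces of $R$ meeting the boundary. Passing from $G$ to $H$ reattaches $S$ through $\rho$, so the interface block of $\mathbf{B}_H$ is obtained from that of $\mathbf{B}_G$ by applying this induced permutation to the relevant rows and columns, up to adjustments coming from how the new faces inherit their boundary edges. The core computation is to exhibit an explicit unimodular pair $(P,Q)$ with $P\,\mathbf{B}_G\,Q = \mathbf{B}_H$, built directly from the rotor symmetry. Because $R$ is Tutte's specific rotor of order $3$, this interface is small and fully explicit, so the required integer row and column operations can be written down by hand.

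The hard part will be the bookkeeping at the interface: precisely identifying which face cycles are newly created by each gluing, how they wrap around the attachment vertices, and how their entries in $\mathbf{B}_G$ versus $\mathbf{B}_H$ are related by $\rho$. The planarity hypothesis on $G$ and $H$ is exactly what guarantees that Theorem B applies and that the interface faces are genuine bounded faces of a plane graph; without it the face cycle matrix is unavailable and this line of argument breaks down. Once the unimodular equivalence of the interface blocks is established, the block structure immediately yields the same Smith normal form for $\mathbf{B}_G$ and $\mathbf{B}_H$, and Theorem B gives $\Jac(G) \simeq \Jac(H)$.
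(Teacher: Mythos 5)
Your overall architecture does match the paper's: reduce via Theorem B to showing that $\mathbf{B}_G$ and $\mathbf{B}_H$ have the same Smith normal form, write both matrices in block form (faces inside $S$, faces inside $R$, faces created at the interface), observe that the $S$-block and the interface data agree, and conclude a permutation/unimodular equivalence. However, there is a genuine error at the heart of your symmetry argument. You claim that ``passing from $G$ to $H$ reattaches $S$ through $\rho$,'' the order-$3$ rotation, and you propose to build the unimodular pair $(P,Q)$ from $\rho$. But reattaching through $\rho$ cannot be what distinguishes $H$ from $G$: since $\rho$ is an automorphism of $R$, the map that is $\rho$ on $R$ and the identity on $S$ descends to an isomorphism between the two gluings (in symbols: applying $\rho \sqcup \mathrm{id}_S$ to the identifications $\rho^i(v)\sim g(\rho^{i+1}(v))$ produces exactly the identifications $\rho^{i+1}(v)\sim g(\rho^{i+1}(v))$ defining $G$). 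Under your reading, $G\cong H$ always, the theorem is vacuous, and Tutte's construction could never produce non-isomorphic graphs. Tutte's second supergraph is in fact obtained by gluing through the \emph{reversal} of the cyclic order --- turning the rotor over, i.e.\ replacing $R$ by its mirror image $R'$ --- and this reflection is \emph{not} an automorphism of $R$; that is precisely why $G\not\cong H$ in general, and why the theorem has content.

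The step your proposal is missing is the one the paper supplies in place of your $\rho$-argument: (i) the mirror image $R'$ of the plane graph $R$ has the same face cycles with the same sizes and pairwise edge-intersection numbers, so $\mathbf{B}_{R'}$ equals $\mathbf{B}_R$ up to a relabeling permutation of face cycles; and (ii) because the three boundary arcs of $R$ between consecutive attachment vertices are equivalent under the rotation, the subgraph $N$ carried by the newly created interface faces --- their sizes, their mutual adjacencies, and their adjacencies $C_S$ to the faces of $S$ --- is unchanged when $R$ is replaced by $R'$, while the adjacency blocks $A_G$ and $A_H$ between the faces of $R$ (resp.\ $R'$) and $N$ differ by that same relabeling permutation. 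This is where the rotational symmetry genuinely enters (it makes the interface data invariant under the reflection); the rotation itself is not the map relating $G$ and $H$. With (i) and (ii), the block matrices
\[
\textbf{B}_G=
	\begin{bmatrix}
		\textbf{B}_R & A_G & \textbf{0}  \\
		A^T_G & \textbf{B}_{N} & C_S  \\
		\textbf{0}^T & C^T_S & \textbf{B}_S
	\end{bmatrix}, \qquad
	\textbf{B}_H=
	\begin{bmatrix}
		\textbf{B}_{R'} & A_H & \textbf{0}  \\
		A^T_H & \textbf{B}_{N} & C_S  \\
		\textbf{0}^T & C^T_S & \textbf{B}_S
	\end{bmatrix}
\]
are simultaneously permutation-equivalent and the conclusion follows from Theorem B. As written, your proposed ``core computation'' built from $\rho$ would only reproduce the tautology that reattaching through an automorphism yields an isomorphic graph; it would never compare $G$ with the actual second supergraph $H$.
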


\medskip
\textbf{Acknowledgment} We would like to thank Jaehoon Kim for many helpful conversations and his various comments on the first draft of the paper. We are grateful to Chris Eppolito for his detailed feedback and for pointing out some minor mistakes in the first draft. We also thank Yoav Len and Moshe Cohen for helpful comments on the first draft. We thank Matt Baker for pointing out the work of Chen and Ye \cite{chen2009critical} which is partially overlapped with ours. Finally, we thank Carlos A. Alfaro for letting us know his recent work and other related references. 

\section{Preliminaries}\label{section: preliminaries}

Throughout the paper, by a graph we always mean a finite, connected multigraph without loops unless otherwise stated. For a graph $G$, we let $V(G)$ be the set of vertices of $G$ and $E(G)$ be the multiset of edges of $G$. A \emph{divisor} on  $G$ is an element of the free abelian group generated by $V(G)$:
\[
\textrm{Div}(G)=\{\sum_{v \in V(G)} D(v)v \mid D(v) \in \mathbb{Z}\}.
\]
The \emph{degree} of a divisor $D=\sum_{v \in V(G)} D(v)v$, denoted by $\textrm{deg}(D)$, is the sum $\sum_{v \in V(G)} D(v)$. This defines the following group homomorphism:
\[
\textrm{deg}: \textrm{Div}(G) \to \mathbb{Z}, \quad D \mapsto \textrm{deg}(D). 
\]
Let $D_1=\sum_{v \in V(G)} D_1(v)v$ and $D_2=\sum_{v \in V(G)} D_2(v)v$ be divisors of $G$. We say that $D_1$ is obtained from $D_2$ by a \emph{lending move at $v$}, if
\begin{equation}
D_1=D_2-\sum_{vw \in E(G)} (v-w) = D_2 -\deg_G(v)v + \sum_{vw \in E(G)}w, 
\end{equation}
where $\deg_G(v)$ is the degree of a vertex $v$. Similarly, $D_1$ is obtained from $D_2$ by a \emph{borrowing move at $v$} if
\begin{equation}
D_1=D_2+\sum_{vw \in E(G)} (v-w) = D_2 +\deg_G(v)v - \sum_{vw \in E(G)}w.
\end{equation}
For $D_1,D_2 \in \Div(G)$, we let $D_1 \sim D_2$ if $D_1$ can be obtained from $D_2$ by a finite sequence of lending and borrowing moves. Clearly, this is a congruence relation on $\Div(G)$, hence we obtain the quotient group $\Pic(G):=\Div(G)/\sim$, called the Picard group of $G$. One can easily see that if $D_1 \sim D_2$, then $\deg(D_1)=\deg(D_2)$. In particular, the degree homomorphism factors through $\Pic(G)$, that is, we have the following homomorphism:
\begin{equation}\label{eq: deg map for Pic}
\deg: \Pic(G) \to \mathbb{Z}, \quad [D] \mapsto \deg(D), 
\end{equation}
where $[D]$ is the equivalence class of a divisor $D$. The Jacobian  of $G$, $\Jac(G)$, is the kernel of the degree homomorphism $\eqref{eq: deg map for Pic}$, and hence $\Jac(G)$ is the torsion subgroup of $\Pic(G)$. In fact, the following short exact sequence splits, where the set of sections of the homomorphism $\deg$ is in bijection with the subset $\Pic^1(G) \subseteq \Pic(G)$ consisting of the equivalence classes of degree $1$ divisors:
\begin{equation}
\begin{tikzcd}[column sep=0.7cm]
0 \arrow[r]& 
\Jac(G) \arrow[r] &
\Pic(G) \arrow[r, "\deg"] & 
\mathbb{Z} \arrow[r] &
0
\end{tikzcd}
\end{equation}

One may avoid using the combinatorial game of lending and borrowing moves and define the Jacobian of a graph purely in terms of linear algebra via the Laplacian of a graph. Recall that for a finite graph $G$, once we fix an ordering of $V(G)$, the Laplacian of $G$, $L_G$, is defined as follows:
\[
L_G= D_G - A_G,
\]
where $D_G$ is the degree matrix of $G$ and $A_G$ is the adjacency matrix of $G$. Then, one has the following map of $\mathbb{Z}$-modules:
\[
L_G:\mathbb{Z}^{|V(G)|} \to \mathbb{Z}^{|V(G)|}, \quad \vec{v} \mapsto L_G\vec{v}. 
\]
With this one has
\[
\Pic(G) \simeq \textrm{coker}(L_G). 
\]
Similarly, it is well-known that one can compute the Jacobian of a graph via the \emph{reduced Laplacian}, $\tilde{L}_G$, which is a matrix obtained by removing the $i$-th column and $i$-th row for any $i=1,2,\dots,|V(G)|$. Then, as in the case of the Laplacian, $\tilde{L}_G$ defines a $\mathbb{Z}$-module morphism, and 
\[
\Jac(G) \simeq \textrm{coker}(\tilde{L}_G). 
\] 
We note that one may compute the Smith normal form of a reduced Laplacian, $\tilde{L}_G$, to find the invariant factors, and hence find $\Jac(G)$. See \cite[\S 2]{corry2018divisors} or \cite{lorenzini2008smith}. 
 

\section{Jacobians Under Graph Gluing}\label{section: examples of jacobians under graph gluing}

In this section, we consider Jacobians of graphs obtained by gluing cycle graphs in several ways. To the best of our knowledge, the only consideration of Jacobians under graph gluing is when one glues two graphs along one vertex. This is elementary and well-known, but we include a proof for completeness. For instance, see \cite{corry2018divisors}.

\begin{pro}\label{proposition: gluing one vertex} 
Let $G_1$ and $G_2$ be graphs. Let $G$ be the graph obtained by gluing $G_1$ and $G_2$ along one vertex. Then 
\[
\emph{Jac}(G) \simeq \emph{Jac}(G_1) \times \emph{Jac}(G_2). 
\]
\end{pro}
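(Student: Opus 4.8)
The plan is to work entirely with the reduced-Laplacian description $\Jac(G) \simeq \textrm{coker}(\tilde{L}_G)$ recalled in Section \ref{section: preliminaries}, exploiting the freedom in the choice of which vertex's row and column we delete. Write $v$ for the vertex along which $G_1$ and $G_2$ are glued, so that $V(G) = \{v\} \sqcup (V(G_1)\setminus\{v\}) \sqcup (V(G_2)\setminus\{v\})$. I would order the vertices of $G$ by listing $v$ first, then the remaining vertices of $G_1$, and finally the remaining vertices of $G_2$, and then form $\tilde{L}_G$ by deleting exactly the row and column indexed by the glued vertex $v$.

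The key structural observation is that $v$ is a cut vertex of $G$: no edge joins a vertex of $V(G_1)\setminus\{v\}$ to a vertex of $V(G_2)\setminus\{v\}$. Moreover, gluing along a single vertex changes no vertex degree other than that of $v$ and creates or destroys no edge among the vertices of $G_1$ (resp.~$G_2$). Hence every entry of $\tilde{L}_G$ indexed by two vertices coming from $G_1$ agrees with the corresponding entry of the reduced Laplacian $\tilde{L}_{G_1}$ obtained by deleting $v$ from $G_1$, and likewise for $G_2$, while every entry indexed by one vertex from each side vanishes. With respect to the chosen ordering, $\tilde{L}_G$ is therefore block diagonal,
\[
\tilde{L}_G = \begin{pmatrix} \tilde{L}_{G_1} & 0 \\ 0 & \tilde{L}_{G_2} \end{pmatrix},
\]
where $\tilde{L}_{G_i}$ is the reduced Laplacian of $G_i$ obtained by deleting the glued vertex. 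From here the conclusion is immediate, since the cokernel of a block-diagonal integer matrix is the direct sum of the cokernels of its blocks:
\[
\textrm{coker}(\tilde{L}_G) \simeq \textrm{coker}(\tilde{L}_{G_1}) \oplus \textrm{coker}(\tilde{L}_{G_2}),
\]
which is exactly $\Jac(G) \simeq \Jac(G_1)\times \Jac(G_2)$.

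The only point genuinely requiring care — and the step I would write out most carefully — is the justification of the block-diagonal form: confirming both that no edges run between the two sides and that the degrees of all non-glued vertices are unaffected by the gluing. This is precisely why deleting the glued vertex is the right move: it removes the single row and column through which the two Laplacians interact via $v$, isolating the two independent blocks. Everything else is a formal consequence of the reduced-Laplacian presentation of the Jacobian, so I do not anticipate any serious obstacle.
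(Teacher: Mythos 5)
Your proof is correct, but it takes a genuinely different route from the paper. The paper argues at the level of divisors: given classes $[D_1]\in\Jac(G_1)$ and $[D_2]\in\Jac(G_2)$, it forms the divisor $D_1*D_2$ on $G$ that adds the coefficients at the glued vertex and keeps all other coefficients, and asserts that $([D_1],[D_2])\mapsto[D_1*D_2]$ is an isomorphism $\Jac(G_1)\times\Jac(G_2)\to\Jac(G)$; the verification (well-definedness under lending/borrowing moves, injectivity, surjectivity) is left to the reader. Your argument instead works with the presentation $\Jac(\cdot)\simeq\mathrm{coker}(\tilde{L})$, and the whole content is concentrated in one structural fact: deleting the cut vertex $v$ from the Laplacian of $G$ leaves a block-diagonal matrix whose blocks are exactly the reduced Laplacians of $G_1$ and $G_2$ taken at $v$ (using the paper's remark that any vertex may be deleted to form a reduced Laplacian, and connectivity of $G_1$, $G_2$, which the paper assumes throughout). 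After that, the isomorphism of cokernels is formal. What each approach buys: the paper's map $\Psi$ is intrinsic and makes the isomorphism explicit at the level of chip configurations, which is conceptually illuminating but leaves several routine checks implicit; your proof localizes all the work in the easily verified block-diagonal decomposition, so nothing is left unverified, at the cost of being coordinate-dependent and giving a less transparent description of where a given divisor class goes under the isomorphism.
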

\begin{proof}
Let $V(G_1)=\{v_1,\dots,v_n\}$ and $V(G_2)=\{u_1,\dots,u_m\}$. We may assume that $G$ is obtained by gluing $v_k$ and $u_k$. For a divisor $D$, we let $[D]$ be the divisor class of $D$ in $\Pic(G)$. For divisors $D_1=\sum_{i=1}^na_iv_i \in \dv(G_1)$ and $D_2=\sum_{i=1}^mb_iu_i \in \dv(G_2)$, we define the following divisor on $G$:
\[
D_1*D_2:=\sum_{i=1,i\ne k}^n a_iv_i + \sum_{i=1,i\ne k}^m b_iu_i + (a_k+b_k)v_k.
\]
Now, one can easily check that the following map is an isomorphism of groups:
\[
\Psi : \Jac(G_1) \times \Jac(G_2) \to \Jac(G), \quad ([D_1],[D_2])  \mapsto ([D_1*D_2]).
\]
\end{proof}

\begin{myeg}
Consider $G_1 = G_2 = K_3$. Let $G$ be the graph obtained by gluing $G_1$ and $G_2$ along one vertex - see Figure 1.
\begin{figure}[ht]
\centering
	  \includegraphics[width=.5\textwidth]{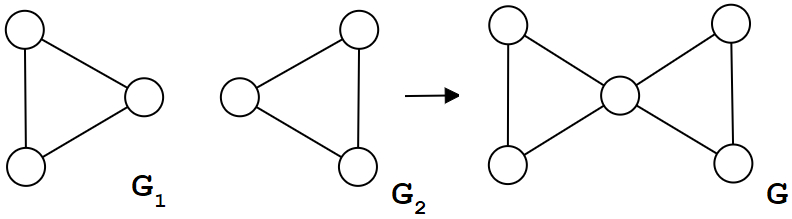}
\caption{\footnotesize$\ K_3$ glued to$\ K_3$ (single vertex)}\label{figure: single K_3}
\end{figure}

We have $\Jac(K_3) \simeq \mathbb{Z}/3\mathbb{Z}$. Hence, $\Jac(G_1) \times \Jac(G_2) \simeq \mathbb{Z}/3\mathbb{Z} \times \mathbb{Z}/3\mathbb{Z}$. A reduced Laplacian $\widetilde{L}_G$ and its Smith normal form $N_G$ are as follows:

\[
\widetilde{L}_G = \begin{bmatrix}{}
        2 & 0 & 0 & -1\\
        0 & 2 & -1 & 0 \\
        0 & -1 & 2 & 0 \\
        -1 & 0 & 0 & 2
    \end{bmatrix},
    \qquad 
  N_G=\begin{bmatrix}{}
  1 & 0 & 0 & 0\\
  0 & 1 & 0 & 0 \\
  0 & 0 & 3 & 0 \\
  0 & 0 & 0 & 3
  \end{bmatrix}
\]

It follows that $\Jac(G) \simeq \mathbb{Z}/3\mathbb{Z} \times \mathbb{Z}/3\mathbb{Z}$.
\end{myeg}

The following example shows that Proposition \ref{proposition: gluing one vertex} could fail to hold when we glue two graphs along even one edge. 

\begin{myeg}\label{example: glued along one edge}
Let $H_1 = H_2 = K_3$ and glue one common edge to obtain $H$ - see Figure 2.
    
\begin{figure}[ht]
   \begin{center}
	  \includegraphics[width=.5\textwidth]{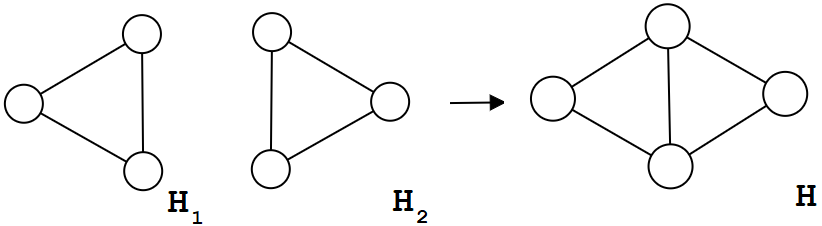}\\
	\end{center}
\caption{\footnotesize$\ K_3$ glued to$\ K_3$ (single edge)}
    \label{fig:my_label}
\end{figure}
A reduced Laplacian $\widetilde{L}_H$ and its Smith normal form $N_H$ are as follows:
\[
\widetilde{L} = \begin{bmatrix}{}
        2 & -1 & 0 \\
        -1 & 3 & -1 \\
        0 & -1 & 2 \\
    \end{bmatrix}, 
    \qquad 
N_H=\begin{bmatrix}{}
1 & 0 & 0\\
0 & 1 & 0 \\
0 & 0 & 8 \\
\end{bmatrix}    
\]
The invariant factors are $1, 1, 8$. This implies that $\ \Jac(H) \simeq \mathbb{Z}/8\mathbb{Z}$. In particular, 
\[
\Jac(H) \not \simeq \Jac(H_1) \times \Jac(H_2).
\]
This has failed Proposition \ref{proposition: gluing one vertex}. The special case of gluing along a single vertex is the key aspect and quite unique to the gluing process. This implies the resulting Jacobian of a resulting graph $H$ depends very highly on the way the two graphs $H_1$ and $H_2$ are arranged.
\end{myeg}

Let $C_n$ denote the cycle graph with $n$ vertices. Let $n,k,p$ be positive integers such that $p < \min\{n,k\}$, and $A$ (resp.~$B$) be an ordered set of edges of $C_n$ (resp.~$C_k$) such that $|A|=|B|=p$. We fix cyclic orientations of $C_n$ and $C_k$, and let $C_n*_{A,B} C_k$ be the graph obtained by gluing $C_n$ and $C_k$ along the edges in $A$ and $B$; if $A=\{e_1,\dots,e_p\}$ and $B=\{t_1,\dots,t_p\}$, then we glue $e_i$ and $t_i$ for each $i$ in such a way that the orientations of $e_i$ and $t_i$ are same. In general, the resulting graph $C_n*_{A,B} C_k$ does not have to be planar as it depends on how we glue $C_n$ and $C_k$. However, one can characterize gluing patterns of edges of $C_n$ and $C_k$ so that the resulting graph $C_n*_{A,B} C_k$ is planar by using the fact that a graph $G$ is planar if and only if the conflict graph of every cycle in $G$ is bipartite. See \cite{tutte1958homotopy}. 

We first compute the Jacobian of graphs obtained by gluing two cycle graphs along distinct edges where the resulting graph is planar. In the case of gluing one edge $(p=1)$, we obtain a graph whose Jacobian is cyclic. The following theorem by Cori and Rossin will be our main computational tool:

\begin{mythm}\cite[Theorem 2]{cori2000sandpile}\label{theorem: planar by Cori and Rossin}
Let $G$ be a planar graph and $\hat{G}$ be any of its duals~\footnote{We note that for a planar graph $G$, ``the'' dual graph $\hat{G}$ is not unique in the sense that it depends on a particular embedding.}, then
\[
\emph{Jac}(G)\simeq \emph{Jac} (\hat{G}).
\]
\end{mythm}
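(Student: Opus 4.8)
The plan is to give the Jacobian an intrinsic, orientation-independent description as the discriminant group of a lattice attached to $G$, and then to use the single structural feature of planar duality that matters here: it interchanges cycles and cuts. Concretely, I would fix an orientation of the edges of $G$, equip $\Z^{E(G)}$ with the standard inner product $\langle\cdot,\cdot\rangle$, and write $\partial\colon\Z^{E(G)}\to\Z^{V(G)}$ for the incidence map. Set
\[
\mathcal{F}:=\Ker(\partial),\qquad \mathcal{K}:=\Img(\partial^{\top}),
\]
the integral cycle (flow) lattice and the integral cut lattice; over $\R$ these are orthogonal complements. For a lattice $\Lambda$ I write $\Lambda^{\#}=\{x\in\Lambda\otimes\R : \langle x,\lambda\rangle\in\Z \text{ for all }\lambda\in\Lambda\}$ for its dual, and recall that $\Lambda^{\#}/\Lambda$ is computed as the cokernel of the Gram matrix of any $\Z$-basis of $\Lambda$.

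The first step is the identification $\Jac(G)\simeq\mathcal{K}^{\#}/\mathcal{K}$. This is essentially immediate from the preliminaries: the vertex stars $\{\partial^{\top}(\delta_v)\}_{v\neq v_0}$ (where $\delta_v$ is the standard basis vector at $v$ and $v_0$ is a fixed vertex) form a $\Z$-basis of $\mathcal{K}$, and their Gram matrix is precisely a reduced Laplacian, since $\langle\partial^{\top}(\delta_v),\partial^{\top}(\delta_w)\rangle=(\partial\partial^{\top})_{vw}=(L_G)_{vw}$. Hence $\mathcal{K}^{\#}/\mathcal{K}\simeq\operatorname{coker}(\tilde{L}_G)\simeq\Jac(G)$. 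The second step is the lattice lemma $\mathcal{K}^{\#}/\mathcal{K}\simeq\mathcal{F}^{\#}/\mathcal{F}$: because the incidence matrix of a connected graph is totally unimodular, both $\mathcal{F}$ and $\mathcal{K}$ are saturated sublattices of the unimodular lattice $\Z^{E(G)}$, and they are orthogonal complements of one another; the standard fact that complementary primitive sublattices of a unimodular lattice have isomorphic discriminant groups then gives $\mathcal{F}^{\#}/\mathcal{F}\simeq\mathcal{K}^{\#}/\mathcal{K}\simeq\Jac(G)$.

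The final step is the duality itself. Fixing an embedding and its dual $\hat{G}$, the canonical edge bijection $e\mapsto e^{*}$ identifies $\Z^{E(G)}=\Z^{E(\hat{G})}$ isometrically; orienting each $e^{*}$ by a fixed rotation convention, a facial cycle of $G$ is carried to the vertex star of the corresponding vertex of $\hat{G}$, so that $\mathcal{F}(G)=\mathcal{K}(\hat{G})$ as sublattices (and dually $\mathcal{K}(G)=\mathcal{F}(\hat{G})$). Combining the three steps,
\[
\Jac(G)\simeq\mathcal{F}(G)^{\#}/\mathcal{F}(G)=\mathcal{K}(\hat{G})^{\#}/\mathcal{K}(\hat{G})\simeq\Jac(\hat{G}).
\]
I expect the main obstacle to be this last step carried out honestly at the integral level: the identification of cycle and cut spaces under duality is transparent over $\Q$, but matching the two \emph{lattices} requires pinning down the dual orientations so that signs are consistent around each face and each dual vertex. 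As an independent check on cardinalities, tree--cotree duality gives a bijection between the spanning trees of $G$ and of $\hat{G}$, so $|\Jac(G)|=\tau(G)=\tau(\hat{G})=|\Jac(\hat{G})|$ by the matrix--tree theorem.
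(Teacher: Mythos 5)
The paper gives no proof of this statement to compare against: it is imported verbatim from \cite[Theorem 2]{cori2000sandpile} and used purely as a black box (it is the engine behind Proposition \ref{proposition: gluing two consecutive} and Proposition \ref{proposition: reduced = cycle matrix}). Your proposal is therefore necessarily a different route, and it is a correct one: it is the lattice-theoretic argument in the style of Bacher, de la Harpe, and Nagnibeda, realizing $\Jac(G)$ as the discriminant group $\mathcal{K}^{\#}/\mathcal{K}$ of the cut lattice (its Gram matrix in the vertex-star basis being exactly the reduced Laplacian, which matches the paper's definition $\Jac(G)\simeq\operatorname{coker}(\tilde{L}_G)$), invoking the standard fact that complementary saturated sublattices of the unimodular lattice $\Z^{E(G)}$ have isomorphic discriminant groups, and then letting planar duality exchange flows and cuts. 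What this buys is an actual explanation of \emph{why} duality preserves the Jacobian, with orientation-independence built in; it is also precisely the conceptual picture of which the paper's own Proposition \ref{proposition: reduced = cycle matrix} (the identity $\mathbf{B}_G=\widetilde{L}_{\hat{G}}$) is a shadow.

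Two steps should be carried out honestly, as you yourself anticipated. First, saturation of $\mathcal{K}=\Img(\partial^{\top})$: total unimodularity suffices, but the elementary argument is that an integral vector orthogonal to all flows equals $\partial^{\top}(y)$ for a real potential $y$ whose differences across edges are integers, and connectedness lets one shift $y$ to an integral potential; note you need this for $\hat{G}$ as well, so use that the dual of a connected plane graph is connected. Second, the integral equality $\mathcal{F}(G)=\mathcal{K}(\hat{G})$: rather than proving directly that the facial cycles generate the flow lattice over $\Z$, it is cleaner to observe that the rotate-by-$90^{\circ}$ orientation convention gives the containment $\mathcal{K}(\hat{G})\subseteq\mathcal{F}(G)$ (each dual vertex star is, with consistent signs, a face boundary of $G$), that the ranks agree by Euler's formula since $|V(\hat{G})|-1=|E(G)|-|V(G)|+1$, and that $\mathcal{K}(\hat{G})$ is saturated; a saturated sublattice of $\mathcal{F}(G)$ of full rank must equal it. One last caveat worth a sentence: if $G$ has a bridge then $\hat{G}$ has a loop, which the paper's graph conventions exclude; this is harmless for your argument (a bridge lies in no cycle and a loop in no cut, so both lattices are zero in that coordinate, and loops do not affect the Laplacian), but it should be said.
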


\begin{pro}\label{proposition: gluing two consecutive}
Let $n,k,p$ be positive integers such that $p <\min\{n,k\}$. Let $A=\{e_1,...e_p\}$ (resp.~$B=\{t_1,...,t_p\}$) be an ordered set of $p$-consecutive edges of $C_n$ (resp.~$C_k$). Then we have
 \[
  \emph{Jac}(C_n*_{A,B} C_k) \simeq \mathbb{Z}/d\mathbb{Z} \times \mathbb{Z}/ ((nk-p^2)/d)\mathbb{Z},
        \] \\
        where $d = \emph{gcd}(n,k,p)$. In particular, if $p=1$, then 
        \[
    \emph{Jac}(C_n*_{A,B} C_k) \simeq \mathbb{Z}/(nk-1)\mathbb{Z}.    
        \]
\end{pro}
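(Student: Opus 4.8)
The plan is to recognize that $C_n *_{A,B} C_k$, when $A$ and $B$ are runs of $p$ consecutive edges, is a \emph{theta graph} (three internally disjoint paths joining two vertices), and then to compute its Jacobian via the planar dual, where the reduced Laplacian collapses to a $2\times 2$ matrix. Since the paper has set up Theorem \ref{theorem: planar by Cori and Rossin} as the main computational tool, the dual route is far cheaper than taking the Smith normal form of the full reduced Laplacian of the theta graph directly.

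First I would unwind the gluing. Identifying the two length-$p$ paths of $C_n$ and $C_k$ along their common orientation glues them into a single shared path; writing $u,v$ for its endpoints, the leftover arc of $C_n$ is a $u$--$v$ path of length $n-p$ and the leftover arc of $C_k$ a $u$--$v$ path of length $k-p$. Because $p<\min\{n,k\}$, all three lengths $p$, $n-p$, $k-p$ are positive, so $C_n *_{A,B} C_k$ is exactly the theta graph on these three paths. I would then pass to the dual: the theta graph is planar with first Betti number $2$, so Euler's formula gives a plane embedding with three faces, and with the three paths nested the edges of the shared path separate one pair of faces while the two leftover arcs separate the remaining pairs. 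By Theorem \ref{theorem: planar by Cori and Rossin} its Jacobian agrees with that of the dual $\hat G$, which is the three-vertex multigraph carrying $p$, $n-p$, and $k-p$ parallel edges on its three sides.

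The core computation is then to write down the $3\times 3$ Laplacian of $\hat G$, delete one row and column, and take the Smith normal form of the $2\times 2$ matrix $\begin{pmatrix} a+b & -b \\ -b & b+c \end{pmatrix}$ with $(a,b,c)=(p,\,n-p,\,k-p)$. The first invariant factor is the gcd of the entries, namely $\gcd(a,b,c)$, and the second is $\det/\gcd(a,b,c)=(ab+bc+ca)/\gcd(a,b,c)$. Finally I would simplify: $\gcd(p,\,n-p,\,k-p)=\gcd(n,k,p)=:d$, and expanding $ab+bc+ca$ with these values collapses to $nk-p^2$, so $\emph{Jac}(C_n*_{A,B}C_k)\simeq \mathbb{Z}/d\mathbb{Z}\times\mathbb{Z}/((nk-p^2)/d)\mathbb{Z}$; when $p=1$ one has $d=\gcd(n,k,1)=1$, leaving the cyclic group $\mathbb{Z}/(nk-1)\mathbb{Z}$.

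These steps are routine arithmetic, so I expect the only genuine care to lie in \emph{setting up the dual correctly}: verifying that a plane embedding yields precisely three edge-bundles of multiplicities $p$, $n-p$, $k-p$ with no spurious vertex identifications, which is where the hypothesis $p<\min\{n,k\}$ and the ``consecutive edges'' assumption are really used. A secondary point worth checking explicitly is that the two computed numbers form a valid invariant-factor chain, i.e.\ that $d=\gcd(a,b,c)$ divides $(ab+bc+ca)/d$; writing $a=da'$, $b=db'$, $c=dc'$ shows $ab+bc+ca$ is divisible by $d^2$, so the divisibility holds and the decomposition above is the honest structure of the group.
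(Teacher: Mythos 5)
Your proposal is correct and follows essentially the same route as the paper: identify the glued graph as a planar (theta) graph, pass to the three-vertex dual multigraph with edge bundles of multiplicities $p$, $n-p$, $k-p$, invoke Theorem \ref{theorem: planar by Cori and Rossin}, and read off the Smith normal form of a $2\times 2$ reduced Laplacian. The only cosmetic difference is that you delete an inner-face vertex of the dual (giving $\begin{pmatrix} n & -(n-p) \\ -(n-p) & n+k-2p \end{pmatrix}$ up to labeling) whereas the paper deletes the outer-face vertex (giving $\begin{pmatrix} n & -p \\ -p & k \end{pmatrix}$); both yield the same invariant factors $d$ and $(nk-p^2)/d$.
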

\begin{proof}
Let $G=C_n*_{A,B} C_k$. Clearly $G$ is planar, so we let $\hat{G}$ be the planar dual of $G$. One can easily see that $\hat{G}$ has 3 vertices; 2 vertices to represent the planar regions contained by the cycle graphs, and a third vertex to represent the outer region. The first 2 vertices have exactly $p$ edges between them and $n-p$ and $k-p$ edges respectively to the third vertex. The Laplacian matrix $L_{\hat{G}}$ of $\hat{G}$ is given below.
\[
        L_{\hat{G}} = 
        \begin{bmatrix}
         n & -p & -(n-p) \\
         -p & k & -(k-p) \\
         -(n-p) & -(k-p) & n+k-2p \\
        \end{bmatrix}
\]
With respect to the third vertex, the reduced Laplacian matrix is given by the following matrix:
\[
   M= \begin{bmatrix}
         n & -p  \\
         -p & k \\
        \end{bmatrix}
\]
Now, one can easily check the Smith normal form of $M$ is as follows:
   \[
\begin{bmatrix}
d& 0  \\
0 & \frac{nk-p^2}{d} \\
\end{bmatrix}
\]
Therefore, from Theorem \cite{cori2000sandpile}, we obtain
\[
\Jac(G) \simeq \Jac(\hat{G}) \simeq \mathbb{Z}/d\mathbb{Z} \times \mathbb{Z}/ ((nk-p^2)/d)\mathbb{Z}.
\]
 \end{proof} 

\begin{myeg}\label{example: examples of gluing}
Let $G$ be the graph obtained by gluing $\ C_8$ and $\ C_{10}$ along 4 consecutive edges as in Figure 3 below.
 \begin{figure}[ht]\label{figure: C_10 glued to C_8 along 4 consecutive edges}
    \begin{center}
	  \includegraphics[width=.2\textwidth]{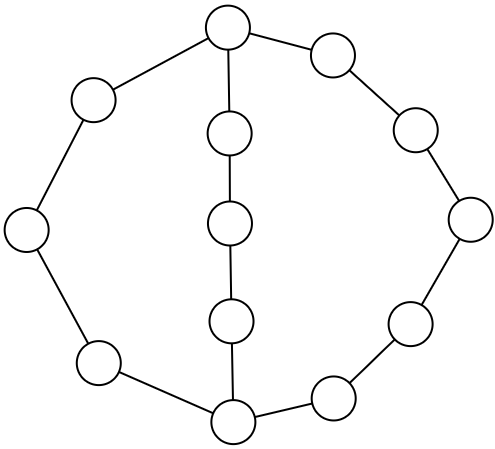}\\
	\end{center}
\caption{\footnotesize$\ C_8$ glued to$\ C_{10}$ (4 consecutive edges)}
\end{figure} 

We have $d=\text{gcd}(n,k,p) = \text{gcd}(10,8,4) = 2$ and $nk-p^2 = 64$. By Proposition \ref{proposition: gluing two consecutive}, we have 
\[
    \Jac(G) \simeq \mathbb{Z}/2\mathbb{Z} \times \mathbb{Z}/ 32\mathbb{Z}.
\]
\end{myeg}

In fact, one can apply the same idea as in Proposition \ref{proposition: gluing two consecutive} to the following gluing procedures. We omit the proofs. 

\begin{enumerate}
\item 
Let $C_n^*$ be a cycle graph where each edge splits into 2 distinct, undirected parallel edges. Then,
\begin{equation}
\Jac(C_n^*) \simeq \left(\Z / 2\Z \right)^{n-2} \times \Z / (2n) \Z.
\end{equation}
This may be seen as gluing two cycle graphs $C_n$ along the set of all isolated vertices.
\vspace{0.2cm}
\item 
\emph{Fan Graphs} are the join of $\overline{K_m}$ ($m$ vertices with no edge) and $P_{n}$ (a path with $n$ vertices), denoted as $F_{m,n}$. Here is $F_{1,5}$:
\begin{figure}[ht]
\begin{center}
	  \includegraphics[width=.2\textwidth]{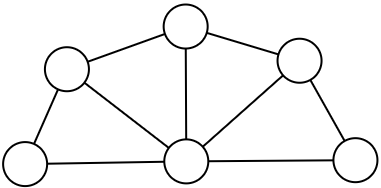}\\
	 \caption{\footnotesize Fan graph $F_{1,5}$}
	\end{center}
\end{figure}
	
One can easily compute that $\Jac(F_{1,5}) \simeq \Z/55\Z$. In general, one may apply the same idea as in the proof of Proposition \ref{proposition: gluing two consecutive} to obtain the following for $n \geq 3$:
\begin{equation}
\Jac(F_{1,n}) \simeq \Z/(3x_{n-1} - x_{n-2})\Z,
\end{equation}
where $x_i = |\Jac(F_{1,i})|$.
\vspace{0.2cm}
\item 
 Given a cycle graph $C_n$, let $A$ and $B$ be two disjoint sets of consecutive edges along $C_n$ each with $p_1$ and $p_2$ edges respectively. Let $p=p_1+p_2$, and suppose the $v_1 , v_{p_1}$ and  $v_{p_1+a} , v_{p+a}$ are the first and last vertices, respectively, of the paths $A$ and $B$. Draw an edge between $v_1$ and $v_{p+a}$, as well as $v_{p_1}$ and  $v_{p_1+a}$. Denote this new graph as $H$. Here is $C_{14}$ with $p_1 = 2$, $p_2=3$, and $a=5$ as an example in Figure 5:
\begin{figure}[ht]
\begin{center}	\includegraphics[width=.2\textwidth]{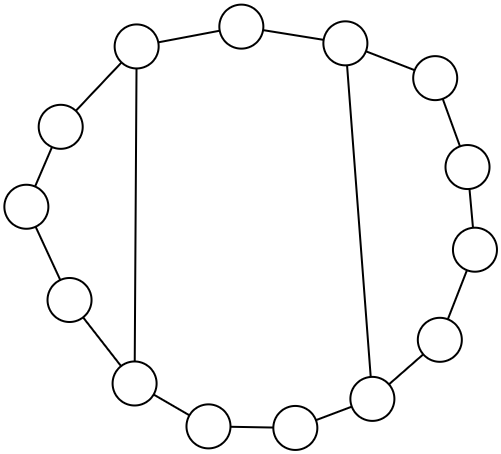}
\caption{\footnotesize Gluing of three cycles}
\end{center}	
\end{figure}    
   
Then, with $q(a) = n(p+1)-p^2+a(n-p)(p+2)-(p+2)a^2$, we have 
\begin{equation}
    \Jac(H) \simeq \Z/q(a)\Z.
\end{equation}
\vspace{0.2cm}
\item
   Given a cycle graph $C_n$, fix an independent set of 3 vertices, and add 3 edges joining them in a triangle. Let $a, b, c$ be the number of edges that $C_n$ has been partitioned into by this triangle, such that $a+b+c = n$. Denote this new graph by $H$, then we have
    \[
    \Jac(H) \simeq \Z/d\Z \times \Z/(f/d)\Z,
    \] 
    where $d = \text{gcd}(a+1,b+1,c+1)$ and
\[
f = n(a+1)(b+1)(c+1)- \big( a^2(b+1)(c+1) + b^2(a+1)(c+1)+c^2(a+1)(b+1) \big).
\]
The following is $C_{14}$ with $a = 4$, $b=6$, and $c=4$ in Figure 6:

\begin{figure}[ht]
\begin{center}
	  \includegraphics[width=.2\textwidth]{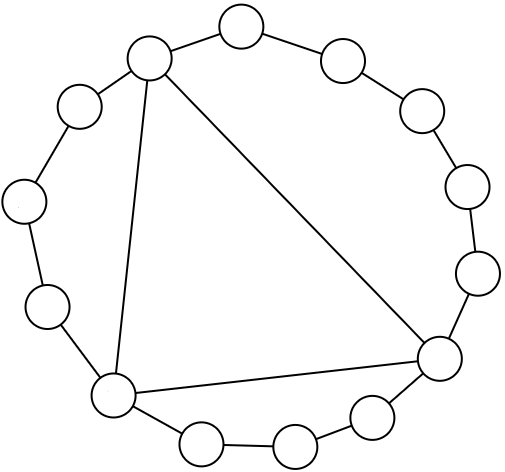}\\
	 \caption{\footnotesize Gluing of four cycles}
	\end{center}
\end{figure}	
\end{enumerate}

\section{The Jacobian of Graphs via Cycle Matrices}\label{section: jacobians of graphs via cycle matrices}

Let $G$ be a connected planar graph with $m$ edges and $n$ vertices containing $q$ cycles ($q\geq 1$). We assume that a planar graph $G$ is embedded into the plane. Fix an orientation on $G$, and choose some arbitrary positive direction of rotation, for instance, clockwise direction. For every edge of $G$, it will either be contained or not contained within a particular cycle as well as with or against this positive direction. Once we label the edges and cycles of $G$, we can define the \emph{cycle matrix},\footnote{Note that we slightly altered the definition to include orientation of a graph so that it would coincide with the reduced Laplacian later in Proposition \ref{proposition: reduced = cycle matrix}.} $B(G):=(B_{ij})_{q \times m}$ as follows:
\begin{equation}
B_{ij} = 
\begin{cases}
1, \quad \textrm{ if $i^{\textrm{~th}}$ cycle contains $j^{\textrm{~th}}$ edge in positive direction,} \\
-1, \quad \textrm{ if $i^{\textrm{~th}}$ cycle contains $j^{\textrm{~th}}$ edge in negative direction,} \\
0, \quad  \textrm{ otherwise.} \\
\end{cases}
\end{equation}
The rank of $B(G)$ is said to be the \emph{circuit rank}\footnote{This is also known as the genus $g(G)$ of a graph $G$.}, which is equal to $g=m-n+1$. 
By removing the rows from $B(G)$ which do not correspond to a face cycle, one obtains a $g\times m$ matrix $B(G)_f$. 
For notational convenience, we let
\[
\mathbf{B}_G:=B_f B_f^T, \quad B_f:=B(G)_f.
\]
One can easily check that $\mathbf{B}_G$ is a $g\times g$ symmetric matrix which is invertible, where $g=\textrm{rk}(B(G))$, and each entry of $\mathbf{B}_G$ is given as follows: 
\begin{equation}\label{eq: cycle matrix}
(\mathbf{B}_G)_{ij} = \begin{cases}
|f_i|, \textrm{ if $i=j$},\\
-|f_i \cap f_j|, \textrm{  if $i \neq j$}.\\
\end{cases}
\end{equation}
where $f_i$ is the $i^{th}$ face cycle and $|f_i|$ is the number of edges in $f_i$. In particular, this implies that $\mathbf{B}_G$ only depends on the underlying graph $G$ (without orientation or rotation), and hence $\mathbf{B}_G$ can be defined independently.\footnote{As we mentioned in introduction, after we posted our paper on arXiv, we learned that \cite{chen2009critical} and \cite{alfaro2020structure} have a similar construction.}

\begin{rmk}
As we mentioned, $\mathbf{B}_G$ depends on an embedding of $G$ into the plane. In what follows, we always assume that $G$ is embedded into the plane so that $\mathbf{B}_G$ is well-defined. 
\end{rmk}

\begin{myeg}
Let $H$ be the graph in Example \ref{example: glued along one edge}. Consider the following orientation on the graph with labeled edges as shown below. 

\begin{center}
\begin{figure}[ht]
	  \includegraphics[width=.2\textwidth]{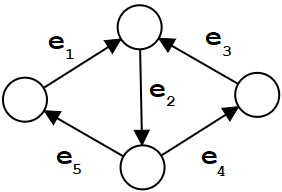}\\
	  \caption{\footnotesize $H$ with an orientation}
\end{figure}
	\end{center}

We consider a positive direction of rotation as being clockwise. We have exactly $3$ cycles, namely $C_1 = (e_1,e_2,e_5)$, $C_2=(e_2,e_3,e_4)$, and $C_3=(e_1,e_3,e_4,e_5)$. The cycle matrix $B(H)$ is given below.
\[
B(H)=	\begin{bmatrix}
		1 & 1 & 0 & 0 & 1 \\
		0 & -1 & -1 & -1 & 0 \\
		1 & 0 & -1 & -1 & 1
	\end{bmatrix}
\]
$C_1$ and $C_2$ are face cycles, and hence we have
\[
B_f=B(H)_f=	\begin{bmatrix}
1 & 1 & 0 & 0 & 1 \\
0 & -1 & -1 & -1 & 0 
\end{bmatrix}
\]
Hence we obtain
\[
\mathbf{B}_H=B_fB_f^T=	\begin{bmatrix}
3 & -1  \\
-1 & 3 
\end{bmatrix}
\]
\end{myeg}

Now, the following is straightforward. 

\begin{pro}
Let $G$ be a connected planar graph with $m$ edges and $n$ vertices containing $q$ cycles ($q\geq 1$). Then, with the same notation as above, the following hold. 
\begin{enumerate}
	\item 
All eigenvalues of $\mathbf{B}_G$ are positive. 	
	\item 
$B_fB_f^T$ and $B_f^TB_f$ have the same nonzero eigenvalues. 
\end{enumerate}
\end{pro}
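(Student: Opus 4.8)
The plan is to establish both claims by elementary linear algebra, the only genuinely graph-theoretic ingredient being the rank of $B_f$, which is exactly where planarity enters.

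For (1), I would first observe that $\mathbf{B}_G = B_f B_f^T$ is a Gram matrix: for every $x \in \mathbb{R}^g$ one has $x^T \mathbf{B}_G x = x^T B_f B_f^T x = \|B_f^T x\|^2 \geq 0$, so $\mathbf{B}_G$ is real symmetric and positive semidefinite, and in particular all of its eigenvalues are real and nonnegative. To upgrade this to strict positivity it suffices to show $\mathbf{B}_G$ is nonsingular (a positive semidefinite symmetric matrix with no zero eigenvalue is positive definite), equivalently that $B_f$ has full row rank $g$. Indeed, $x^T \mathbf{B}_G x = 0$ forces $B_f^T x = 0$, and if $B_f$ has trivial left kernel this yields $x = 0$, whence $\mathbf{B}_G$ is positive definite and all eigenvalues are strictly positive.

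The required rank statement is the one point deserving care, and it is precisely where the planar embedding is used. The rows of $B_f$ are the signed incidence vectors of the $g = m - n + 1$ bounded face cycles of the chosen embedding; by Euler's formula the embedding has $m - n + 2$ faces, hence exactly $g$ bounded ones, and it is a classical fact that the boundaries of the bounded faces form a basis of the cycle space $H_1(G,\mathbb{R})$, whose dimension is the circuit rank $g = \mathrm{rk}(B(G))$. Thus the $g$ rows of $B_f$ are linearly independent in $\mathbb{R}^m$, so $\mathrm{rk}(B_f) = g$ and $B_f^T$ has trivial kernel. Alternatively, one may simply invoke the invertibility of $\mathbf{B}_G$ already recorded before the statement, which is equivalent to this full-rank assertion.

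For (2), I would apply the standard transport-of-eigenvectors argument, valid for any rectangular matrix $A := B_f$. If $\lambda \neq 0$ is an eigenvalue of $A^T A$ with eigenvector $v$, then $A^T A v = \lambda v$, and applying $A$ gives $(A A^T)(Av) = \lambda(Av)$; since $\lambda v \neq 0$ we have $Av \neq 0$, so $Av$ is an eigenvector of $A A^T = \mathbf{B}_G$ for $\lambda$. The map $v \mapsto Av$ sends the $\lambda$-eigenspace of $A^T A$ into that of $A A^T$, the map $w \mapsto A^T w$ goes the other way, and their composite is multiplication by $\lambda \neq 0$, so both are isomorphisms of the corresponding nonzero eigenspaces. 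Hence $B_f B_f^T$ and $B_f^T B_f$ have identical nonzero eigenvalues with the same multiplicities. Neither part is hard; the sole obstacle is the full-rank claim for $B_f$ in (1), as this is what makes $\mathbf{B}_G$ positive definite rather than merely positive semidefinite.
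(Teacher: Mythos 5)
Your proof is correct. The paper in fact offers no proof at all for this proposition (it is introduced with ``Now, the following is straightforward''), and your argument --- the Gram-matrix positive semidefiniteness of $B_fB_f^T$, upgraded to positive definiteness via the classical fact that the bounded face cycles of a planar embedding form a basis of the cycle space (so $B_f$ has full row rank $g=m-n+1$), together with the standard transport-of-eigenvectors argument showing $AA^T$ and $A^TA$ share nonzero eigenvalues --- is exactly the routine verification the authors leave implicit, with the one genuinely substantive point (the rank of $B_f$) correctly identified and justified.
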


One may compute the Jacobian of a planar graph $G$ via the matrix $\mathbf{B}_G$ as the following proposition shows. 

\begin{pro}\label{proposition: reduced = cycle matrix}
Let  $G$ be a connected, planar graph. Then $\mathbf{B}_G$ and a reduced Laplacian $\widetilde{L}_G$ have the same invariant factors. In particular, $\emph{Jac}(G)$ can be computed from $\mathbf{B}_G$.  
\end{pro}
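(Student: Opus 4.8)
The plan is to identify $\mathbf{B}_G$ with the reduced Laplacian of a planar dual of $G$ and then to invoke Theorem~\ref{theorem: planar by Cori and Rossin}. Fix the given embedding of $G$ in the plane, and let $\hat{G}$ be the associated dual graph: its vertices are the faces of $G$, namely the $g$ bounded face cycles $f_1,\dots,f_g$ together with the unbounded (outer) face $f_0$, and its edges are in bijection with the edges of $G$, where the dual edge of $e$ joins the two faces incident to $e$. First I would form the reduced Laplacian $\widetilde{L}_{\hat{G}}$ of $\hat{G}$ obtained by deleting the row and column indexed by the outer face $f_0$. Since deleting any single vertex produces a reduced Laplacian computing the same group, one has $\textrm{coker}(\widetilde{L}_{\hat{G}}) \simeq \Jac(\hat{G})$.

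The key step is the identity $\mathbf{B}_G = \widetilde{L}_{\hat{G}}$, which I would prove by an entrywise comparison based on formula \eqref{eq: cycle matrix}. The diagonal entry $(\mathbf{B}_G)_{ii} = |f_i|$ is the number of edges bounding $f_i$, which is exactly the degree of $f_i$ as a vertex of $\hat{G}$, hence equals $(L_{\hat{G}})_{ii}$. For $i \neq j$, the entry $(\mathbf{B}_G)_{ij} = -|f_i \cap f_j|$ is the negative of the number of edges shared by the boundaries of $f_i$ and $f_j$; under planar duality each such shared edge is precisely a dual edge joining $f_i$ and $f_j$ in $\hat{G}$, so this equals $(L_{\hat{G}})_{ij}$. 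As neither index runs over $f_0$, the resulting $g \times g$ matrix is exactly $\widetilde{L}_{\hat{G}}$. Combining this with the previous sentence gives $\textrm{coker}(\mathbf{B}_G) \simeq \Jac(\hat{G})$, and Theorem~\ref{theorem: planar by Cori and Rossin} then yields $\textrm{coker}(\mathbf{B}_G) \simeq \Jac(\hat{G}) \simeq \Jac(G) \simeq \textrm{coker}(\widetilde{L}_G)$.

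It remains to pass from isomorphic cokernels to the statement about invariant factors. Both $\mathbf{B}_G$ and $\widetilde{L}_G$ are nonsingular integer matrices — $\mathbf{B}_G$ has positive eigenvalues by the preceding proposition and $\widetilde{L}_G$ is the standard reduced Laplacian — so each has finite cokernel, and two nonsingular square integer matrices have isomorphic cokernels if and only if they share the same list of invariant factors exceeding $1$ (the remaining invariant factors are all equal to $1$ and merely pad out the differing sizes $g$ versus $|V(G)|-1$). Thus the nontrivial invariant factors agree, which is the content of the proposition; in particular $\Jac(G)$ can be read off from $\mathbf{B}_G$.

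I expect the main obstacle to lie in the bookkeeping behind the identity $\mathbf{B}_G = \widetilde{L}_{\hat{G}}$: one must check that the count of shared boundary edges coincides with the count of dual edges even in the presence of multi-edges, and that bridges — which lie on no cycle, contribute a zero column to $B_f$, and dualize to loops contributing nothing to $L_{\hat{G}}$ — are handled consistently on both sides. Once formula \eqref{eq: cycle matrix} is granted, however, this identification is essentially a direct entrywise verification, so the real leverage comes from the already-established cycle-matrix entry formula together with the Cori--Rossin duality theorem.
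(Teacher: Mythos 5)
Your proposal is correct and follows essentially the same route as the paper's own proof: both establish the entrywise identity $\mathbf{B}_G = \widetilde{L}_{\hat{G}}$ for the planar dual $\hat{G}$ (with the row and column of the outer face deleted) and then conclude via the Cori--Rossin duality theorem. Your additional care about padding the invariant factors equal to $1$ (since $\mathbf{B}_G$ and $\widetilde{L}_G$ have different sizes) and about bridges dualizing to loops are refinements the paper leaves implicit, but they do not change the argument.
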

\begin{proof}
Let $\hat{G}$ be the dual of $G$, and $\widetilde{L}_{\hat{G}}$ be the reduced Laplacian of $\hat{G}$ obtained by removing the row and column of the Laplacian of $\hat{G}$ corresponding to the ``exterior region'' of $G$. We claim that
\[
\mathbf{B}_G = \widetilde{L}_{\hat{G}}. 
\]	
In fact, suppose that vertices of $\hat{G}$ are labelled in such a way that the $i^{\textrm{~th}}$ face cycle of $G$ corresponds to the vertex $i$, and the ``exterior region'' of $G$ corresponds to the vertex $0$. One may observe that $(\widetilde{L}_{\hat{G}})_{ii}$ exactly counts the number of edges in the $i^{\textrm{~th}}$ face cycle. Also, for $i\neq j$, one can easily check that $(\widetilde{L}_{\hat{G}})_{ij}$ is the number of common edges between the $i^{\textrm{~th}}$ and $j^{\textrm{~th}}$ face cycles\footnote{Note this entry is counted as a negative number}. Now, our claim follows from the description \eqref{eq: cycle matrix} of $\mathbf{B}_G$, and the proposition follows from Theorem \ref{theorem: planar by Cori and Rossin} and the claim. 
\end{proof}

\begin{myeg}
Consider the following graph $G$:

\begin{center}
\begin{figure}[ht]
	\includegraphics[width=.3\textwidth]{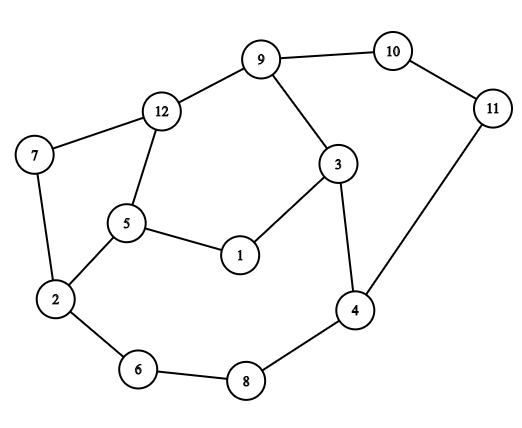}\\
		 \caption{\footnotesize Four face cycles}
\end{figure}
\end{center}

One may check that the Jacobian of $G$ is cyclic with order 476 by directly computing the Laplacian of $G$ which is of size $12 \times 12$. Much simpler, however, is the associated $\textbf{B}_G$ which is only $4\times4$ in size. It is given below.
\[
\mathbf{B}_G=\begin{bmatrix}
4 & -1 & 0 & -1 \\
-1 & 5 & -1 & -2 \\
0 & -1 & 5 & -1 \\
-1 & -2 & -1 & 7
\end{bmatrix}
\]
The Smith normal form of $\mathbf{B}_G$ is as follows:
\[
\begin{bmatrix}
		1 & 0 & 0 & 0 \\
		0 & 1 & 0 & 0 \\
		0 & 0 & 1 & 0 \\
		0 & 0 & 0 & 476
\end{bmatrix}
\]
Clearly, this agrees with the invariant factors produced by the Laplacian matrix of $G$. Thus $\textbf{B}_G$ has also given us $\Jac(G)\simeq \Z/476\Z$. 
\end{myeg}


We have the following proposition generalizing the fan graphs case $(2)$ in Example \ref{example: examples of gluing}. 

\begin{pro}\label{proposition: cycle jacobian}
Let $G$ be a graph obtained by gluing cycle graphs $C_{n_1},\ldots, C_{n_k}$, where each is glued along a single edge to the previous cycle and along a single edge to the next cycle. Then, $\emph{Jac}(G)$ is a cyclic group of order $x_k$, where $ x_i = n_ix_{i-1}-x_{i-2}$ for $3 \leq i \leq k$, and $x_1 = n_1$, $x_2 =n_1 n_2-1$.
\end{pro}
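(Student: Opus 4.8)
The plan is to apply Theorem B (Proposition \ref{proposition: reduced = cycle matrix}) to replace the $|V(G)|$-sized reduced Laplacian by the much smaller matrix $\mathbf{B}_G$, and then to read off both the order and the cyclic structure of $\Jac(G)$ from $\mathbf{B}_G$ directly. First I would fix a planar embedding of $G$: a chain of cycles glued successively along single edges is planar, and it has exactly $k$ bounded faces, the $i$-th being the cycle $C_{n_i}$, so $|f_i|=n_i$. By hypothesis the edge gluing $C_{n_i}$ to $C_{n_{i+1}}$ is a single edge, distinct from the edge gluing $C_{n_i}$ to $C_{n_{i-1}}$, so $|f_i\cap f_{i+1}|=1$ while $|f_i\cap f_j|=0$ whenever $|i-j|\geq 2$. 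By the description \eqref{eq: cycle matrix} this means $\mathbf{B}_G$ is the $k\times k$ tridiagonal matrix
\[
\mathbf{B}_G=\begin{bmatrix}
n_1 & -1 & & & \\
-1 & n_2 & -1 & & \\
 & -1 & n_3 & \ddots & \\
 & & \ddots & \ddots & -1 \\
 & & & -1 & n_k
\end{bmatrix}.
\]

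Next I would compute $\det \mathbf{B}_G$, which (as $\mathbf{B}_G$ is positive definite) equals $|\Jac(G)|$. Expanding along the last row and column, the determinant $D_i$ of the leading $i\times i$ principal submatrix satisfies $D_i = n_i D_{i-1} - D_{i-2}$ with $D_1 = n_1$ and $D_2 = n_1 n_2 - 1$. This is exactly the recurrence defining $x_i$, so $D_k = x_k$ and $|\Jac(G)| = x_k$.

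It remains to show that $\Jac(G)$ is cyclic, i.e. that the Smith normal form of $\mathbf{B}_G$ is $\mathrm{diag}(1,\ldots,1,x_k)$. For this it suffices to exhibit a single $(k-1)\times(k-1)$ minor equal to $\pm 1$, since then the $(k-1)$-st determinantal divisor is $1$ and, by divisibility of the successive determinantal divisors, all invariant factors but the last are forced to be $1$. I would delete the last row and the first column of $\mathbf{B}_G$: because the nonzero entries sit only on the three central diagonals, the resulting $(k-1)\times(k-1)$ matrix is triangular with every diagonal entry equal to $-1$, so its determinant is $(-1)^{k-1}=\pm 1$. Combined with $\det \mathbf{B}_G = x_k$, this yields $\Jac(G)\simeq \Z/x_k\Z$.

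The only genuinely delicate step is the first one: one must verify that the two glued edges on each $C_{n_i}$ are distinct, so that consecutive faces meet in exactly one edge and nonconsecutive faces are edge-disjoint, and that the bounded faces of the embedding are precisely the $k$ cycles with no extra face subdividing them. Once the combinatorics of the faces is pinned down and the tridiagonal form of $\mathbf{B}_G$ is established, both the determinant recurrence and the cyclicity argument are routine.
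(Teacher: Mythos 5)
Your proposal is correct, and the core of it --- fixing a planar embedding, identifying $\mathbf{B}_G$ as the $k\times k$ tridiagonal matrix with diagonal $n_1,\dots,n_k$ and off-diagonal entries $-1$, and evaluating its determinant by the three-term recurrence $D_i=n_iD_{i-1}-D_{i-2}$ --- is exactly what the paper does via Proposition \ref{proposition: reduced = cycle matrix}. Where you genuinely diverge is in the cyclicity step: the paper does not prove cyclicity from the matrix at all, but instead cites an external result (\cite[Theorem 2.4]{becker2016cyclic}) asserting that such chains of cycles have cyclic Jacobian, and then only needs the determinant to pin down the order. Your argument replaces that citation with a direct Smith-normal-form computation: deleting the last row and first column of $\mathbf{B}_G$ leaves a lower-triangular $(k-1)\times(k-1)$ matrix with all diagonal entries $-1$, so the $(k-1)$-st determinantal divisor is $1$, forcing all invariant factors except the last to be $1$. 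This minor computation is valid (the shifted tridiagonal band indeed becomes triangular), and it buys self-containedness: your proof needs nothing beyond Theorem B and elementary linear algebra, whereas the paper's is shorter but outsources the structural claim. Your closing caveat about pinning down the face combinatorics (distinctness of the two gluing edges on each $C_{n_i}$, no spurious bounded faces) is also the right thing to flag; the paper glosses over this with ``one can easily see,'' so you are, if anything, more careful on that point.
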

\begin{proof}
It follows from \cite[Theorem 2.4]{becker2016cyclic} that $\Jac(G)$ is cyclic. Hence it is enough to check the order of $\Jac(G)$. Since $G$ is a planar, connected graph, we can use $\mathbf{B}_G$ to compute the Jacobian by Proposition \ref{proposition: reduced = cycle matrix}. One can easily see that
\[
\mathbf{B}_G=
\begin{bmatrix}
n_1 & -1 & 0 & 0 & \cdots & 0 & 0 \\
-1 & n_2 & -1 & 0 & \cdots & 0 & 0 \\
0 & -1 & n_3 & -1 & \cdots & 0 & 0 \\
0 & 0 &  -1 & n_4 & \cdots & 0 & 0 \\
\vdots & & & & \ddots &  & \vdots \\
0 & 0 & 0 & 0 & \cdots & n_{k-1} & -1 \\
0 & 0 & 0 & 0 & \cdots & -1 & n_k \\
\end{bmatrix}
\]
Now, by induction, one can check that $\det(\mathbf{B}_G)=x_k$ as defined above. 
\end{proof}

\begin{myeg}
Consider a chain of cycles beginning with a 4 cycle, then attach a 6 cycle, then a 5 cycle, and finally a 3 cycle. The graphs are shown below.
\medskip

\hspace{-0.3cm}\includegraphics[width=.15\textwidth]{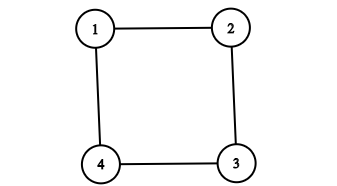} \includegraphics[width=.3\textwidth]{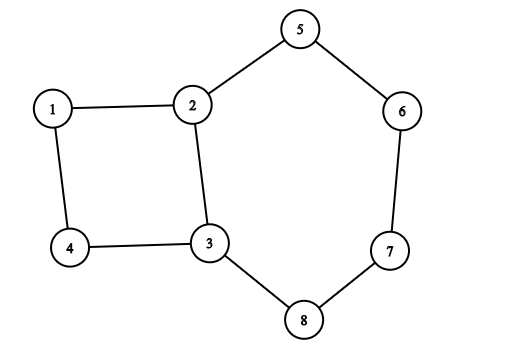}
\hspace{-0.4cm}
\includegraphics[width=.3\textwidth]{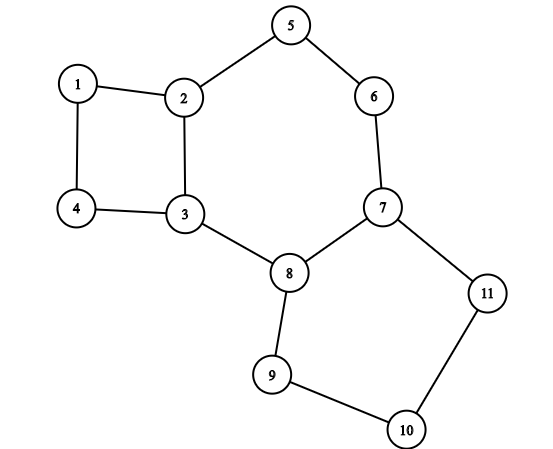}
\hspace{-0.2cm}
\includegraphics[width=.3\textwidth]{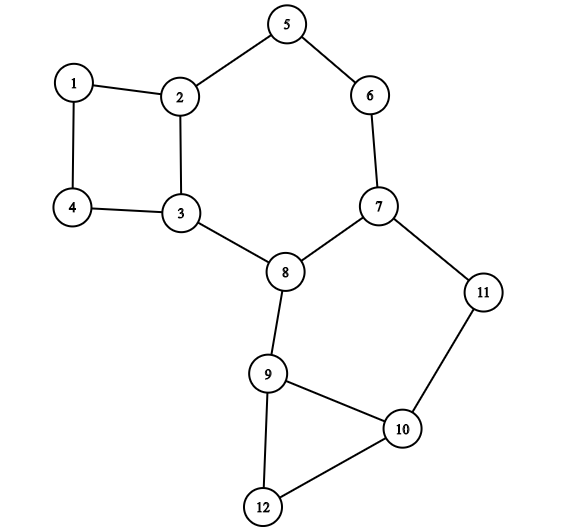}

$n_1 = 4$, $x_1 = 4$, \hspace{0.5cm} $n_2 = 6$, $x_2 = 23$,\hspace{2.2cm} 
$n_3 = 5$, $x_3 = 111$,\hspace{1.5cm} 
$n_4 = 3$, $x_4 =  310$.
\end{myeg}

\section{Tutte's Rotor Construction}  \label{section: Tutte's rotor construction}

We briefly recall Tutte's rotor construction in \cite{tutte1974codichromatic} which produces two non-isomorphic graphs with the same Tutte polynomial. A \emph{rotor} is a triple $(R,f,v)$ consisting of a graph $R$, a graph automorphism $f \in \textrm{Aut}(R)$ of order $n$, and a vertex $v \in V(R)$ such that $\#\{v,f(v),\dots,f^{n-1}(v)\}=n$. Let $S$ be another graph, and 
\[
g:\{v,f(v),\dots,f^{n-1}(v)\} \to V(S)
\]
be a function which does not have to be injective. Tutte's construction glues $R$ and $S$ in two different ways by using $f$ and $g$ to produce two new (non-isomorphic) graphs.\footnote{Tutte called $R$ the \emph{front-graph} and $S$ the \emph{back-graph}.} To be precise, the first glued graph is obtained by identifying $f^i(v)$ with $g(f^i(v))$. The second glued graph is obtained by identifying $f^i(v)$ with $g(f^{i+1}(v))$. We follow Tutte's notation and call the resulting graphs \emph{supergraphs}.

In this section, we prove the question \cite[Question 1.4]{clancy2015note} is true when the resulting supergraphs are planar. To this end, by $(R,f,v)$ we always mean Tutte's original example \cite[Figure 2]{tutte1974codichromatic}; $R$ is the graph in Figure $9$, $f$ is the automorphism of order $3$ such that $f(a)=b,f(b)=c,f(c)=a$, and $v=a$. By abuse of notation, we denote this rotor simply by $R$. 

\begin{figure}[ht]\label{figure: Tutte's original rotor}
\includegraphics[width=.3\textwidth]{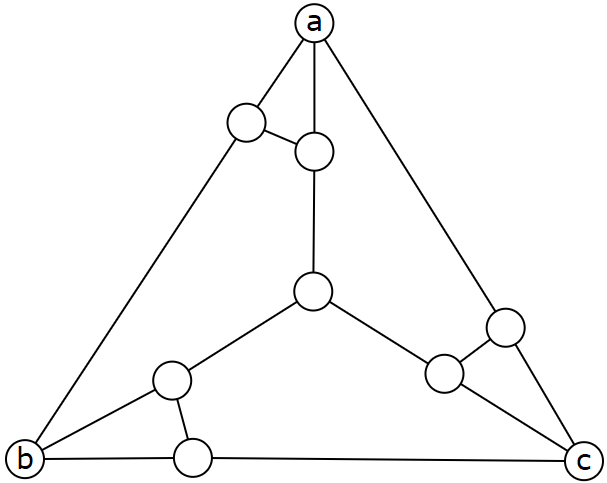}
\caption{Tutte's original example $R$}
\end{figure}

We first consider a \emph{variation of Tutte's construction} - we add an edge between two vertices. To be precise, with the same notation as above, the vertex $f^i(v)$ will be joined by an edge to $g(f^i(v))$ for $i=0,1,2$. This supergraph will be denoted by $G$. We will then use an automorphism $f : V(R) \to V(R)$ which will essentially reflect the graph $R$ along the center vertical line and then construct a new supergraph, $H$, by joining an edge between the vertex $f^{i}(v)$ and $g(f^{i+1}(v))$. Throughout this section, by abuse of notation, we let $G$ and $H$ be two supergraphs obtained in these two ways, although clearly they depend on the graph $S$ to which we glue.

We first prove that $\Jac(G)\simeq \Jac(H)$ when $S$ is a cycle by using the interpretation of the Jacobian via cycle matrices in \S \ref{section: jacobians of graphs via cycle matrices} as this proof will be modified to prove our main theorem and is more illustrating.

\begin{pro}\label{proposition: Tutte's variation}[Variation of Tutte's construction]
Let $S$ be a cycle graph. Let $G$ and $H$ be supergraphs obtained by the variation of Tutte's rotor construction explained above. Then, we have
\[
\Jac(G) \simeq \Jac(H). 
\]
\end{pro}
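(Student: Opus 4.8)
The plan is to compute both Jacobians through their face cycle matrices and to reduce the whole problem to a statement in integer linear algebra. Since $S$ is a cycle and $R$ is planar, both supergraphs $G$ and $H$ are planar once we fix the natural embeddings in which $S$ lies beside $R$ and the three added edges do not cross. Hence Proposition \ref{proposition: reduced = cycle matrix} applies and gives $\Jac(G)\simeq\operatorname{coker}(\mathbf{B}_G)$ and $\Jac(H)\simeq\operatorname{coker}(\mathbf{B}_H)$. It therefore suffices to prove that the integer matrices $\mathbf{B}_G$ and $\mathbf{B}_H$ have the same invariant factors, i.e.\ that they are equivalent under unimodular row and column operations over $\mathbb{Z}$.

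Next I would pin down the face structure. Adding the three edges $f^i(v)$--$g(\cdot)$ does not disturb any bounded face lying inside $R$; consequently the face cycles of $R$, together with their mutual intersection numbers given by \eqref{eq: cycle matrix}, contribute one and the same symmetric block $\mathbf{B}_R$ to both $\mathbf{B}_G$ and $\mathbf{B}_H$. The only new bounded faces are the three ``annular'' faces trapped between $R$ and $S$, each bounded by two added edges, one boundary arc of $R$, and one arc of the cycle $S$. Partitioning the rows and columns as (internal $R$-faces $\mid$ three new faces), both matrices take the block form
\[
\begin{bmatrix} \mathbf{B}_R & C \\ C^{\mathsf{T}} & D \end{bmatrix},
\]
where $D$ is the $3\times 3$ block recording the sizes of, and overlaps among, the three new faces, and $C$ records how the new faces meet the interior faces of $R$ along its three boundary arcs. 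By the order-$3$ rotational symmetry $f$ of $R$, the three boundary arcs are congruent, so the three new faces receive equal contributions from $R$; the contributions from $S$ are just its three arc lengths, which are the same three numbers for $G$ and $H$.

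The heart of the argument is then to compare $C$ and $D$ for the two gluings. The rotation gluing producing $G$ and the reflection gluing producing $H$ attach the arcs of $S$ to the sectors of $R$ in opposite cyclic orders, so I expect $\mathbf{B}_H$ to be obtained from $\mathbf{B}_G$ by reversing the cyclic order of the three new faces while simultaneously applying to the interior faces the permutation induced by $f$. I would make this precise by letting $P$ denote the permutation of all face cycles that reverses the three new faces and applies the $f$-orbit permutation to the $R$-faces, and then verifying $\mathbf{B}_H=P^{\mathsf{T}}\mathbf{B}_G P$: the identity $P^{\mathsf{T}}\mathbf{B}_R P=\mathbf{B}_R$ holds because $f$ is a graph automorphism and hence preserves all face sizes and overlaps, while the corner and off-diagonal blocks match because the reversal of the new faces exactly compensates the reversal of adjacencies forced by the reflection. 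Conjugation by a permutation matrix preserves the Smith normal form, so $\mathbf{B}_G$ and $\mathbf{B}_H$ have identical invariant factors, and $\Jac(G)\simeq\Jac(H)$ follows.

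I expect the main obstacle to be precisely the last verification, namely that the coupling block $C$ is genuinely carried to itself by the combined permutation $P$. This is exactly the point where the chirality of Tutte's rotor intervenes: the reflection changes which interior faces of $R$ abut a given boundary arc, and one must confirm that the $f$-induced relabeling of the interior faces is the one that restores the original adjacency pattern. Establishing this requires reading the face-adjacency data directly off Tutte's explicit order-$3$ rotor $R$, and it is this concrete bookkeeping, rather than any algebraic subtlety, that does the real work. It is also the piece that will have to be upgraded, by tracking adjacencies to arbitrary faces of $S$ rather than to single arcs, when $S$ is replaced by a general connected planar graph in the proof of Theorem C.
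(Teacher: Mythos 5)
Your strategy coincides with the paper's: invoke Proposition \ref{proposition: reduced = cycle matrix} to replace the Jacobians by the matrices $\mathbf{B}_G$ and $\mathbf{B}_H$, decompose these into blocks indexed by (face cycles of $R$) versus (new face cycles created by the gluing), and exhibit a permutation matrix $P$ with $\mathbf{B}_H = P^{T}\mathbf{B}_G P$, which preserves the Smith normal form. The difficulty is that the step you explicitly defer --- verifying that the coupling block is carried to itself by a concretely identified $P$ --- is the entire mathematical content of the paper's proof, so what you have is a correct roadmap rather than a proof. The paper settles it by writing everything out: the coupling blocks $A_G$ and $A_H$ are explicit $6\times 3$ matrices that differ by exactly two row transpositions (a relabeling of the six face cycles of $R$ induced by the reflection $R\to R'=f(R)$), and the new-face block $\mathbf{B}_N$ is \emph{literally identical} in $\mathbf{B}_G$ and $\mathbf{B}_H$ under the natural labeling, since the subgraph $N$ carrying the new face cycles is unchanged by the reflection.

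This last point also shows that your guessed permutation is off: you propose that $P$ should reverse the cyclic order of the three new faces while applying the $f$-orbit permutation to the interior faces, but in the paper's setup the three new faces $d_1,d_2,d_3$ have pairwise distinct data in general (the diagonal of $\mathbf{B}_N$ is $|d_1|,\,|d_2|,\,n$, and the off-diagonal overlaps $-1$, $-|x-y|$, $-|y-z|$ are likewise distinct), so conjugating by any nontrivial permutation of the new faces would change that block; the correct $P$ must act as the identity on the new faces and only permute the face cycles of $R$. Your hypothesis that ``the three new faces receive equal contributions from $R$'' is similarly not borne out --- one of the three new faces is bounded by the cycle $S$ alone. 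None of this breaks your plan, since the existence of \emph{some} suitable $P$ is what matters and your framework would accommodate the correct one, but it confirms your own diagnosis: the concrete face-adjacency bookkeeping on Tutte's explicit rotor is where the real work lies, and it remains to be done in your write-up.
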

\begin{proof}
Let $R'=f(R)$. Since $R$ and $R'$ are isomorphic graphs, $\textbf{B}_R$ and $\textbf{B}_{R'}$ differ only by a permutation of rows and columns corresponding to relabeling face cycles. Let $n$ be the number of vertices of $S$. Choose $3$ vertices of $S$ (not necessarily distinct), and call them $x, y$ and $z$. Define $g$ as the following:
\[
g(a)=x, \quad g(b)=z, \quad g(c)=y. 
\]
With the construction defined above, consider the resulting supergraphs $G$ and $H$ below.\footnote{Although our picture describes when $g$ is injective, we do not assume that $g$ is injective.}
\begin{center}
	\includegraphics[width=.55\textwidth]{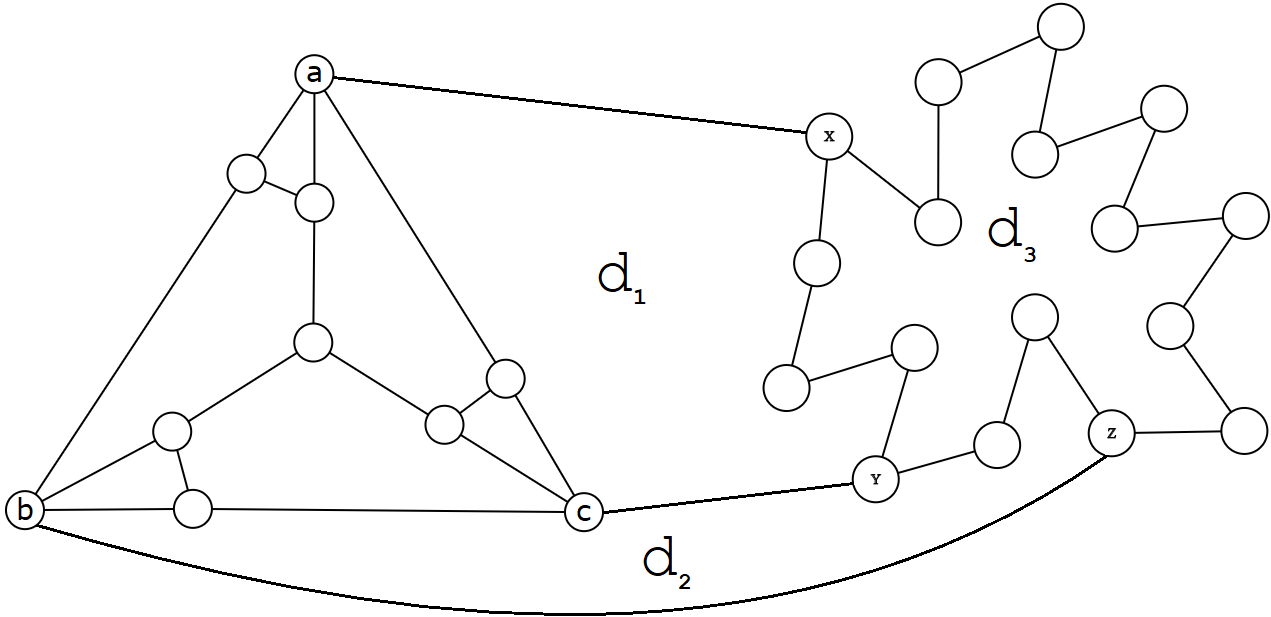}\\
	\textsc{Graph G}
\end{center}
\begin{center}
	\includegraphics[width=.55\textwidth]{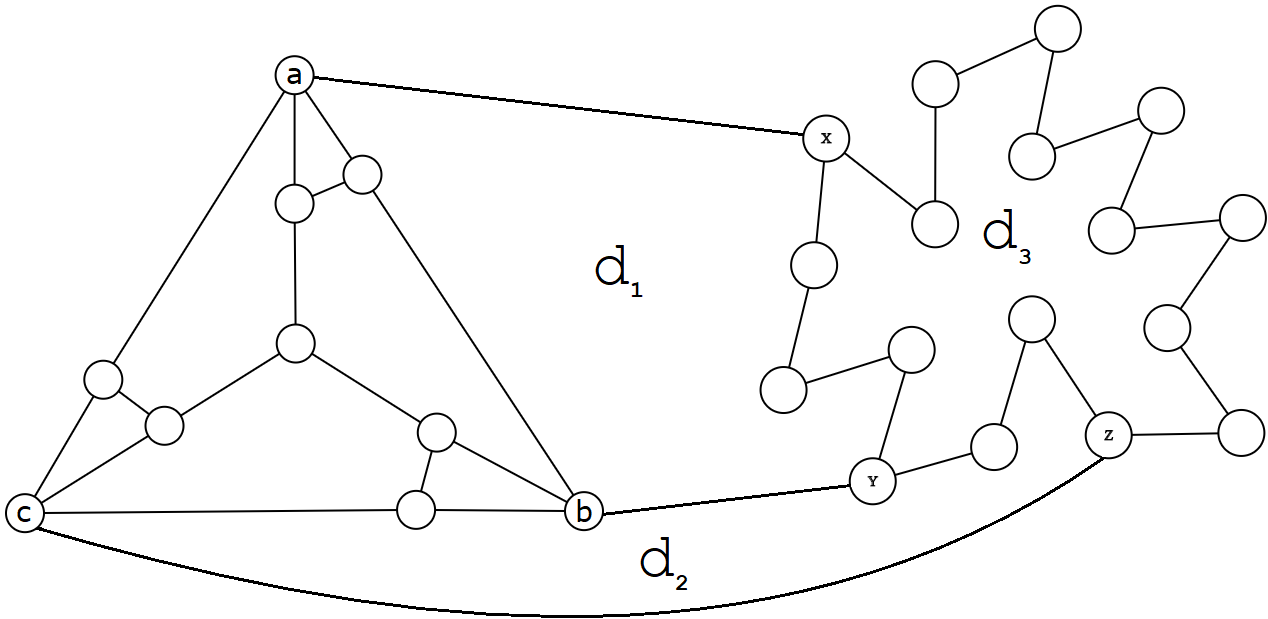}\\
	\textsc{Graph H}
\end{center}

Note that we immediately acquire 3 new face cycles labeled $d_1, d_2$, and $d_3$. Denote the number of edges between $d_1$ and $d_3$ by $|x-y|$ and the number of edges between $d_2$ and $d_3$ by $|y-z|$. Let $N$ be the subgraph of $G$ with the edges contained in only the new face cycles $d_1, d_2$, and $d_3$. Then, we have the following: 
\[
\textbf{B}_{N}=
	\begin{bmatrix}
		|d_1| & -1 & -|x-y| \\
		-1 & |d_2| & -|y-z| \\
		-|x-y| & -|y-z| & n
	\end{bmatrix}\\
\]

Note that $N$ is not altered in the reflection of $R$, that is $N$ is also the subgraph of $H$ obtained in the same way. In fact, $\textbf{B}_{N}$ is a submatrix of $\textbf{B}_G$ and $\textbf{B}_H$ as follows:
\[
	\textbf{B}_G=
	\begin{bmatrix}
		\textbf{B}_R & A_G  \\[3pt]
		A_G^T & \textbf{B}_{N} \\
	\end{bmatrix}, \qquad
	\textbf{B}_H=
	\begin{bmatrix}
		\textbf{B}_{R'} & A_H  \\[3pt]
		A_H^T & \textbf{B}_{N}  \\
	\end{bmatrix}
\]
where $A_G$ and $A_H$ are $6\times 3$ matrices which encode the adjacency between the face cycles of $R$ and $R'$ respectively and $N$ as follows:\footnote{$A_G$ and $A_H$ depend on the relabeling of face cycles.}

\[
	A_G=
	\begin{bmatrix}
		0 & 0 & 0\\
		0 & 0 & 0\\
		-1 & 0 & 0 \\
		-1 & 0 & 0 \\
		0 & -1 & 0 \\
		0 & -1 & 0 \\
	\end{bmatrix}, \qquad 
	A_H=
	\begin{bmatrix}
		-1 & 0 & 0 \\
		-1 & 0 & 0 \\
		0 & 0 & 0\\
		0 & 0 & 0\\
		0 & -1 & 0 \\
		0 & -1 & 0 \\
	\end{bmatrix}
\]
\vspace{0.1cm}

Clearly, these matrices differ by $2$ row permutations corresponding to the relabeling of face cycles of graphs $R$ and $R'$. This implies that $\textbf{B}_G$ and $\textbf{B}_H$ only differ by $2$ row and column permutations which will only affect the submatrices $\textbf{B}_R$ and $\textbf{B}_{R'}$. In particular, $\mathbf{B}_G$ and $\mathbf{B}_H$ have the same Smith normal form. Now, from Proposition \ref{proposition: reduced = cycle matrix}, we have $\Jac(G) \simeq \Jac(H)$.
\end{proof}

\begin{rmk}
One may observe that our proof of Proposition \ref{proposition: Tutte's variation} can be modified to prove a more general result when $S$ is a planar graph.
\end{rmk}

Now we turn our attention to Tutte's original construction; we glue the vertices $a,b,c$ to $g(a)$, $g(b)$, $g(c)$ respectively by identifying each as a single vertex without adding any new edges. We prove that Tutte's original construction with a connected planar graph $S$ obtains two resulting supergraphs which have isomorphic Jacobians if they are planar.

\begin{mythm}\label{theorem: main theorem Tuttes for planar}[Tutte's original construction]
Let $S$ be a connected planar graph. With the same notation as above, if the supergraphs $G$ and $H$ are planar, 
then we have $\Jac(G) \simeq \Jac(H)$. 
\end{mythm}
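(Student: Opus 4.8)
The plan is to follow the strategy of Proposition \ref{proposition: Tutte's variation}, reducing everything to the face cycle matrices. Since $G$ and $H$ are connected and planar, Proposition \ref{proposition: reduced = cycle matrix} lets us replace each Jacobian by the Smith normal form of $\mathbf{B}_G$ and $\mathbf{B}_H$; fixing compatible embeddings with a common outer face lying in $S$ and away from the rotor (permissible since, by that proposition, the invariant factors do not depend on the embedding), it then suffices to exhibit a simultaneous row-and-column permutation carrying $\mathbf{B}_G$ to $\mathbf{B}_H$. First I would partition the face cycles of $G$ into those bounded entirely by edges of $R$ (the interior rotor faces) and all remaining faces, namely those incident to an edge of $S$ together with the faces created across the interface. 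Writing $R'=f(R)$ for the relabeled rotor appearing in $H$, this partition puts the two matrices in block form
\[
\mathbf{B}_G=\begin{bmatrix}\mathbf{B}_{R} & Y_G\\ Y_G^{T} & \mathbf{B}_{2}\end{bmatrix},\qquad \mathbf{B}_H=\begin{bmatrix}\mathbf{B}_{R'} & Y_H\\ Y_H^{T} & \mathbf{B}_{2}\end{bmatrix},
\]
where $\mathbf{B}_2$ records the faces meeting $S$. The off-diagonal blocks $Y_G,Y_H$ are supported only on the three boundary arcs of $R$ joining $a,b,c$, since an interior rotor face can share an edge with an $S$-meeting face only along such an arc.

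Two structural facts drive the argument. First, $\mathbf{B}_R$ and $\mathbf{B}_{R'}$ agree up to a permutation $P$ of face labels, as $R$ and $R'$ are isomorphic. Second, the blocks $\mathbf{B}_2$ coincide, and this is where the rotor hypothesis is essential: in forming $G$ the three boundary arcs of $R$ are attached to the $S$-paths $g(a)g(b)$, $g(b)g(c)$, $g(c)g(a)$, whereas in $H$ the same three paths receive these arcs in cyclically shifted order. Since $f$ has order $3$ and permutes the boundary arcs transitively, the arcs are mutually congruent, so the glued subgraph consisting of $S$ together with the boundary arcs is the same up to an isomorphism fixing $S$ in both $G$ and $H$. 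Consequently the face cycles meeting $S$ are identified with matching edge and intersection counts, so that $\mathbf{B}_2$ is common to both after labeling these faces consistently.

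The main obstacle is matching the off-diagonal blocks. With the $S$-meeting faces labeled consistently, it remains to check that $P Y_G = Y_H$, where $P$ is the permutation of interior rotor faces induced by $f$. This reduces to the claim that the incidence pattern between the interior faces of $R$ and its three boundary arcs is $f$-equivariant, which holds because the arcs form a single $f$-orbit and $f$ is a graph automorphism of $R$. Granting this, conjugating $\mathbf{B}_G$ by the block-diagonal permutation matrix $P\oplus I$ yields $\mathbf{B}_H$, so the two matrices share a Smith normal form. By Proposition \ref{proposition: reduced = cycle matrix} together with Theorem \ref{theorem: planar by Cori and Rossin}, we conclude $\Jac(G)\simeq\Jac(H)$.
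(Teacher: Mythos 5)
Your proposal is correct and takes essentially the same route as the paper's own proof: both reduce the problem via Proposition \ref{proposition: reduced = cycle matrix} to comparing the face-cycle matrices, block-decompose them by face type, observe that the non-rotor blocks are common to $\mathbf{B}_G$ and $\mathbf{B}_H$, and exhibit a row-and-column permutation (coming from relabeling the interior faces of the rotor) carrying one matrix to the other, hence equal Smith normal forms. The only differences are cosmetic: the paper refines your block $\mathbf{B}_2$ into the interface faces $\mathbf{B}_N$ and the faces of $S$ (making explicit a zero block since rotor faces and $S$-faces share no edges), and where the paper simply asserts that the interface subgraph $N$ is unchanged under the re-gluing, you supply the justification via the $f$-orbit/congruence of the three boundary arcs.
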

\begin{proof}
One can observe that $\textbf{B}_S$ is a submatrix in both $\textbf{B}_G$ and $\textbf{B}_H$. Similar to the proof of Proposition \ref{proposition: Tutte's variation}, we let $N$ be the subgraph of $G$ with the edges contained in the new face cycles created after gluing. One can easily check that $N$ does not change after reflecting $R$, that is, $N$ is also the subgraph of $H$ with the edges contained in the new face cycles created after gluing. Hence, $\textbf{B}_{N}$ is a submatrix in both $\textbf{B}_G$ and $\textbf{B}_H$. In fact, we have the following:

\[
\textbf{B}_G=
	\begin{bmatrix}
		\textbf{B}_R & A_G & \textbf{0}  \\[3pt]
		
		A^T_G & \textbf{B}_{N} & C_S  \\[3pt]
		\textbf{0}^T & C^T_S & \textbf{B}_S
	\end{bmatrix}, \qquad
	\textbf{B}_H=
	\begin{bmatrix}
		\textbf{B}_{R'} & A_H & \textbf{0}  \\[3pt]
		A^T_H & \textbf{B}_{N} & C_S  \\[3pt]
		\textbf{0}^T & C^T_S & \textbf{B}_S
	\end{bmatrix}
\]
\vspace{0.2cm}

\noindent where $C_S$ encodes the adjacency between the face cycles of $S$ and $N$ which are the same for both $G$ and $H$. $A_G$ and $A_H$ are the same as in Proposition \ref{proposition: Tutte's variation} which also only differ by permutation. It follows that $\textbf{B}_G$ can be obtained from $\textbf{B}_H$ via row and column permutations. In particular, $\textbf{B}_G$ and $\textbf{B}_H$ have the same Smith normal form, and hence $\Jac(G) \simeq \Jac(H)$ by Proposition \ref{proposition: reduced = cycle matrix}. 
\end{proof}

\begin{myeg}
Consider $S$ to be the following graph: 
\begin{center}
	\includegraphics[width=.5\linewidth]{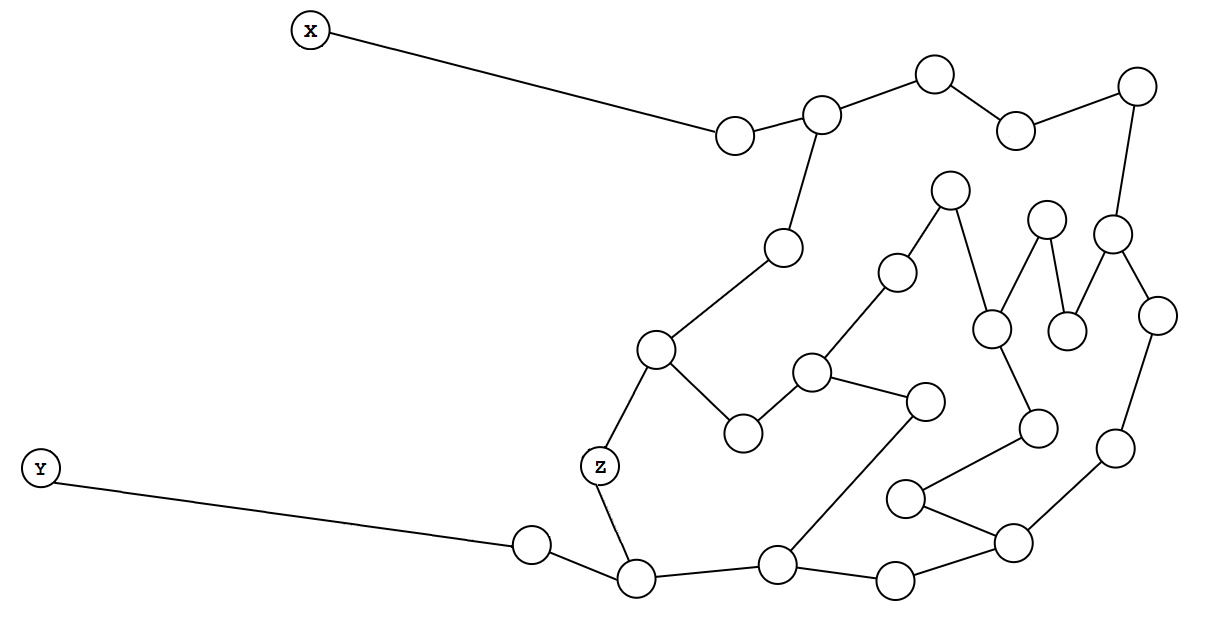}
\end{center}

Defining $g$ as $g(a)=x , g(b)=y , $ and $g(c)=z$, the constructions for graphs $G$ and $H$ are below. One can check the Smith normal form of $\textbf{B}_G$ and $\textbf{B}_H$ are the same and $\Jac(G)\simeq \Jac(H)$ is a cyclic group of order 163,780,565.	

\begin{figure}[ht]
			\begin{center}
				\includegraphics[width=.4\textwidth]{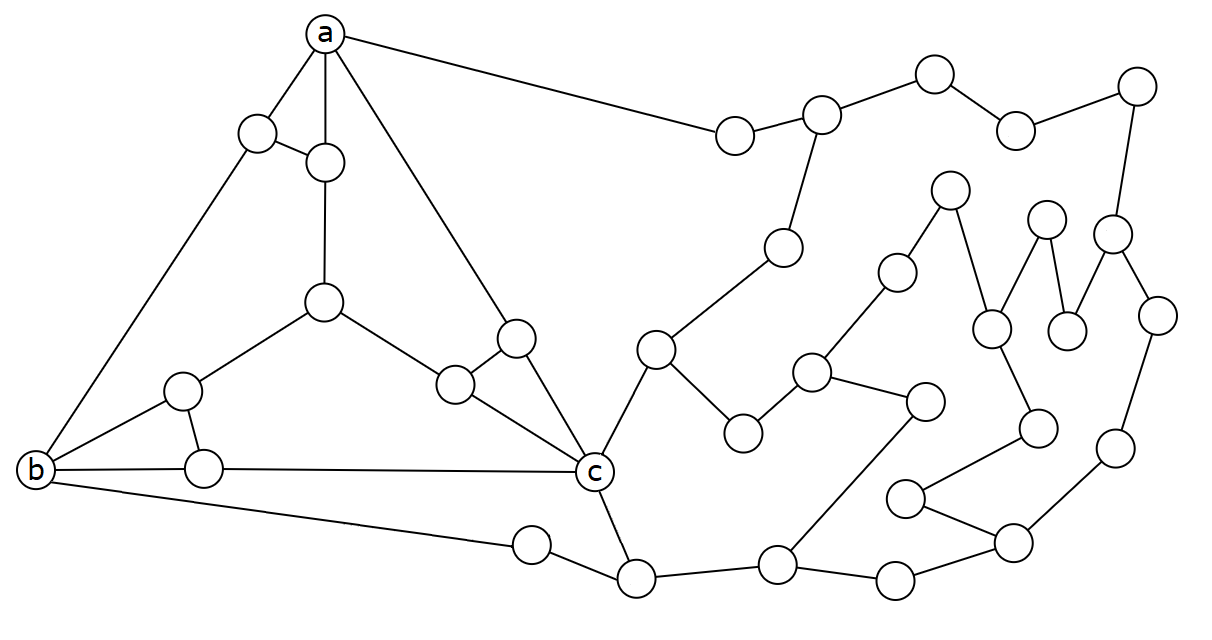}
				\includegraphics[width=.4\textwidth]{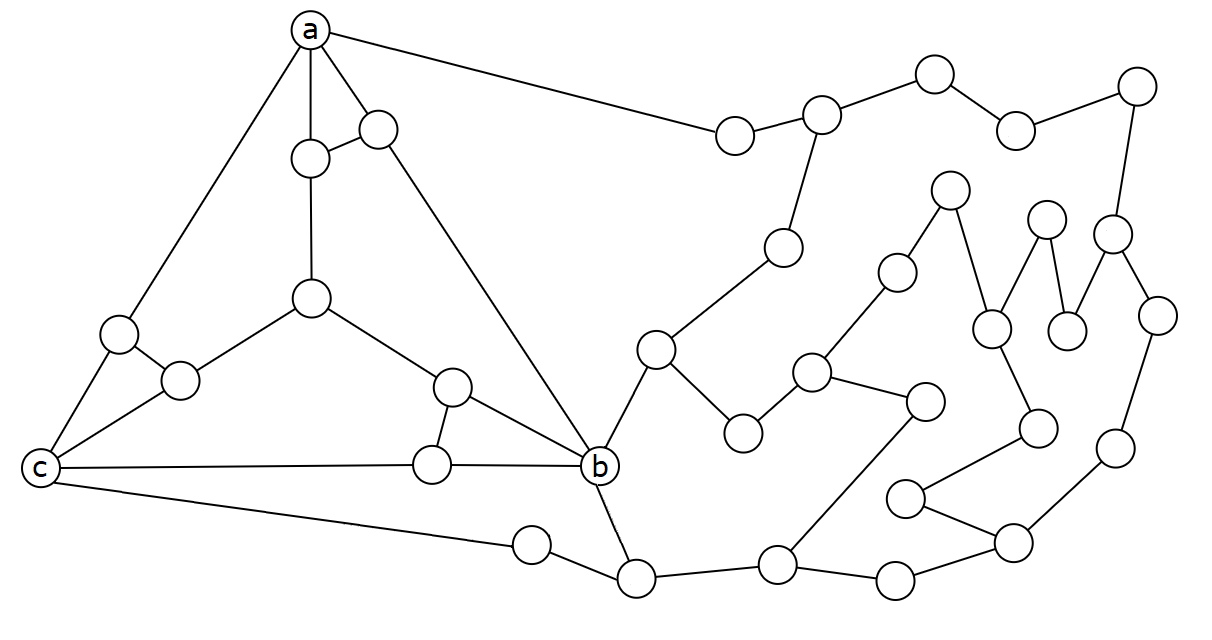}\\
				\textsc{Graph G} \hspace{3cm} \textsc{Graph H}
		\end{center}
	\end{figure}
\end{myeg}

\bibliography{Jacobian}\bibliographystyle{alpha}

\end{document}